\numberwithin{equation}{section}
\def\RR{{\mathbb R}}
\def\MM{{\mathbb M}}
\def\LL{{\mathbb L}}
\def\DD{{\mathbb D}}
\def\FF{{\mathbb{F}}}
\def\Z{\mathcal{Z}}
\def\GG{{\mathbb G}}
\def\HH{{\mathbb H}}
\def\Aff{{\rm Aff}}
\def\wto{\rightharpoonup}
\def\eps{\varepsilon}
\def\ell{l}
\def\mwto{\stackrel{*}{\wto}}
\def\mint{{{\bf-}\!\!\!\!\!\!\hspace{-.1em}\int}}
\def\limsup{\mathop{\overline{\lim}}}
\def\liminf{\mathop{\underline{\lim}}}
\def\mwto{\stackrel{*}{\wto}}
\def\supp{{\rm supp}}
\def\essinf{\mathop{\rm ess\  inf}}
\def\exp{{\rm e}}
\newtheorem{theorem}{Theorem}[section]
\newtheorem{lemma}[theorem]{Lemma}
\newtheorem{proposition}[theorem]{Proposition}
\newtheorem{corollary}[theorem]{Corollary}
\theoremstyle{definition}
\newtheorem{definition}[theorem]{Definition}
\theoremstyle{remark}
\newtheorem{remark}[theorem]{Remark}
\title{Localization principle and relaxation}
\author{Jean-Philippe Mandallena}
\address{Laboratoire MIPA (Math\'ematiques, Informatique, Physique et Applications)\newline UNIVERSITE DE NIMES, Site des Carmes, Place Gabriel P\'eri, 30021 N\^\i mes, France.}
\email{jean-philippe.mandallena@unimes.fr}
\keywords{Relaxation, $W^{1,q}$-quasiconvexification, ru-usc integrand, Young measures, equi-integrability, localization principle, exponential-growth}
\subjclass[2010]{49J45 (49J10, 74G65)}
\begin{document}

\begin{abstract}
Relaxation theorems for multiple integrals on $W^{1,p}(\Omega;\RR^m)$, where $p\in]1,\infty[$, are proved under general conditions on the integrand $L:\MM\to[0,\infty]$ which is Borel measurable and not necessarily finite. We involve a localization principle that we previously used to prove a general lower semicontinuity result. We apply these general results to the relaxation of nonconvex integrals with exponential-growth.
\end{abstract}

\maketitle


\section{Introduction}

This paper is concerned with the problem of finding an integral representation for the ``relaxed functional" $\overline{I}:W^{1,p}(\Omega;\RR^m)\to[0,\infty]$, with $p\in]1,\infty[$, given by
$$
\overline{I}(u)=\inf\left\{\liminf_{n\to\infty}\int_\Omega L(\nabla u_n(x))dx:u_n\to u\hbox{ in }L^p(\Omega;\RR^m)\right\}
$$
with $\Omega\subset\RR^N$ a bounded open set with Lipschitz boundary and $L:\MM\to[0,\infty]$  a Borel measurable and $p$-coercive function, i.e., $L(\cdot)\geq C|\cdot|^p$ for some $C>0$, where $\MM$ denotes the space of all real $m\times N$ matrices.

The concept of ``relaxed functional" was introduced by Bogolubov in 1930 in the case where $m=N=1$ (see \cite{bogolubov30}). Bogolubov developed a one-dimensional theory of relaxation and highlighted the key role of the convexification of $L$ for representing $\overline{I}$. Later, in 1974, Ekeland and Temam extended Bogolubov's relaxation theory to the case $N\geq 1$ and $m=1$ (see \cite{ekeland-temam74}, see also \cite{ioffe-tihomirov68}). Then, in the vector case, i.e., $\min\{m,N\}>1$, in 1982, Dacorogna in \cite{dacorogna82} (see also \cite{dacorogna08}) proved that if $L$ has $p$-growth, i.e., $L(\cdot)\leq c(1+|\cdot|^p)$ for some $c>0$, then $\overline{I}$ can be represented by an integral whose integrand is given by the $W^{1,\infty}$-quasiconvexication, but not the convexification, of $L$, i.e., $\Z_\infty L:\MM\to[0,\infty]$ defined by
$$
\Z_\infty L(\xi):=\inf\left\{\int_YL(\nabla\phi(y))dy:\phi\in \l_\xi+W^{1,\infty}_0(Y;\RR^m)\right\}
$$
with $Y:=]-{1\over 2},{1\over 2}[^N$ and $\l_\xi(y):=\xi y$. The paper of Dacorogna, i.e., \cite{dacorogna82}, is the point of departure of many works on the problem of representing $\overline{I}$ by a variational integral in the vector case: several authors have tried to extend Dacorogna's theorem to more general classes of integrands (see for instance \cite{acerbi-fusco84,ball-murat84,marcellini86,fonseca-maly97,benbelgacem00,sychev05a,oah-jpm07,oah-jpm08a,oah10,sychev10,sychev11,sychev11-2}, see also for concrete examples \cite{adamscontidesimonedolzmann08,contidolzmannklust09}). In the same spirit, the object of the present paper is to study the existence of an integral representation of $\overline{I}$ with respect to a general condition on $L$, that we called ``localization principle", i.e., 
\begin{itemize}
\item[(C$_{p,q}$)] for every $\xi\in\MM$ and every $\{v_n\}_n\subset W^{1,p}(Y;\RR^m)$ such that
$$
\left\{
\begin{array}{l}
v_n\to l_\xi\hbox{ in }L^p(Y;\RR^m)\\
\displaystyle\sup_n\int_YL(\nabla v_n(y))dy<\infty,
\end{array}
\right.
$$
there exist a subsequence $\{v_n\}_n$ (not relabeled) and a sequence $\{w_n\}_n\subset l_\xi+W^{1,q}_0(Y;\RR^m)$ such that
$$
\left\{
\begin{array}{l}
\displaystyle\left|\nabla v_n-\nabla w_n\right|\to0\hbox{ in measure}\\
\displaystyle\left\{L(\nabla w_n)\right\}_n\hbox{ is equi-integrable.,}
\end{array}
\right.
$$
\end{itemize}
that we used in \cite{jpm11} to prove the following lower semicontinuity theorem.
\begin{theorem}\label{lsc-theorem}
Consider $p\in]1,\infty[$ and $q\in[1,\infty]$ and assume that $L$ is continuous. If {\rm (C$_{p,q}$)} holds and if $L$ is $W^{1,q}$-quasiconvex, i.e., $L=\Z_qL$ with $\Z_qL:\MM\to[0,\infty]$ given by
$$
\Z_q L(\xi):=\inf\left\{\int_YL(\nabla\phi(y))dy:\phi\in l_\xi+W^{1,q}_0(Y;\RR^m)\right\},
$$
then the functional $I:W^{1,p}(\Omega;\RR^m)\to[0,\infty]$ defined by
$$
I(u):=\int_\Omega L(\nabla u(x))dx
$$
is lower semicontinuous with respect to the strong convergence of $L^p(\Omega;\RR^m)$.
\end{theorem}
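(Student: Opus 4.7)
The plan is to prove Theorem~\ref{lsc-theorem} via the Fonseca--M\"uller blow-up method, using (C$_{p,q}$) to produce admissible competitors for the $\Z_qL$ problem. Let $u_n\to u$ in $L^p(\Omega;\RR^m)$; one may assume $\liminf_n I(u_n)<\infty$, pass to a subsequence realizing the liminf, and invoke $p$-coercivity of $L$ to deduce that $\{u_n\}$ is bounded in $W^{1,p}(\Omega;\RR^m)$ and $\nabla u_n\wto\nabla u$ in $L^p$, so in particular $u\in W^{1,p}(\Omega;\RR^m)$. Introduce the Radon measures $\mu_n:=L(\nabla u_n)\mathcal{L}^N\lfloor\Omega$, extract a weak-$*$ limit $\mu_n\mwto\mu$, and decompose $\mu=f\mathcal{L}^N+\mu_s$ with $\mu_s\perp\mathcal{L}^N$. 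The task reduces to proving the pointwise inequality $L(\nabla u(x_0))\leq f(x_0)$ for $\mathcal{L}^N$-a.e.\ $x_0\in\Omega$, since then
$$\int_\Omega L(\nabla u)dx\leq\int_\Omega f\,dx\leq\mu(\Omega)\leq\liminf_n\mu_n(\Omega)=\liminf_n I(u_n).$$

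To prove the pointwise bound, fix $x_0$ a Lebesgue point of $\nabla u$ at which $f$ is the Radon--Nikod\'ym derivative of $\mu$ and $\mu_s$ has zero density, and set $\xi:=\nabla u(x_0)$. Define the blow-up $v_n^\rho(y):=\rho^{-1}(u_n(x_0+\rho y)-u(x_0))$ on $Y$; for each fixed $\rho$, $v_n^\rho\to u^\rho:=\rho^{-1}(u(x_0+\rho\cdot)-u(x_0))$ in $L^p(Y;\RR^m)$ as $n\to\infty$, $u^\rho\to l_\xi$ in $L^p(Y;\RR^m)$ as $\rho\to 0$, and
$$\int_YL(\nabla v_n^\rho(y))dy=\rho^{-N}\mu_n(Q_\rho(x_0)).$$
Choose $\rho_k\to 0$ avoiding the countable set of radii for which $\mu(\partial Q_\rho(x_0))>0$, with $\rho_k^{-N}\mu(Q_{\rho_k}(x_0))\to f(x_0)$, and diagonalize in $n$ using $\mu_n(Q_{\rho_k}(x_0))\to\mu(Q_{\rho_k}(x_0))$ to obtain $\tilde v_k:=v_{n_k}^{\rho_k}$ with $\tilde v_k\to l_\xi$ in $L^p(Y;\RR^m)$ and $\int_YL(\nabla\tilde v_k)dy\to f(x_0)$. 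Now (C$_{p,q}$) applied to $\{\tilde v_k\}_k$ supplies, up to a further subsequence, $w_k\in l_\xi+W^{1,q}_0(Y;\RR^m)$ with $|\nabla\tilde v_k-\nabla w_k|\to 0$ in measure and $\{L(\nabla w_k)\}_k$ equi-integrable; the quasiconvexity identity $L=\Z_qL$ applied to each admissible $w_k$ gives $L(\xi)\leq\int_YL(\nabla w_k(y))dy$ for every $k$.

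The main obstacle, as I see it, is the final comparison $\liminf_k\int_YL(\nabla w_k)dy\leq f(x_0)$: (C$_{p,q}$) makes $L(\nabla w_k)$ equi-integrable but imparts no equi-integrability to $L(\nabla\tilde v_k)$ and only matches gradients in measure, so neither Vitali nor Lebesgue applies directly. I would resolve this via Young measures: since $\nabla\tilde v_k-\nabla w_k\to 0$ in measure (both sequences being $L^p$-bounded by $p$-coercivity), they generate the same gradient Young measure $\{\nu_y\}_{y\in Y}$; equi-integrability of $\{L(\nabla w_k)\}_k$ with continuity of $L$ gives $\lim_k\int_YL(\nabla w_k)dy=\int_Y\langle\nu_y,L\rangle dy$, while Fatou's lemma for Young measures (using only $L\geq 0$) gives $\int_Y\langle\nu_y,L\rangle dy\leq\liminf_k\int_YL(\nabla\tilde v_k)dy=f(x_0)$. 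Combining with the quasiconvexity bound yields $L(\xi)\leq f(x_0)$, which is the required pointwise estimate. The selection of the sequence $\rho_k$ and the diagonal extraction of $n_k$ are standard pieces of bookkeeping.
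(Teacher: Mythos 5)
Your argument is correct and is essentially the route the paper itself takes for the corresponding lower bound (the proof of inequality \eqref{FI-MTh-1} in Section 4): blow-up at a.e.\ point, application of (C$_{p,q}$) to the rescaled sequence, and the pair of Young-measure theorems --- the continuity theorem for the equi-integrable sequence $\{L(\nabla w_k)\}_k$ and the semicontinuity (Balder) theorem for the $\tilde v_k$'s --- to close the final comparison, after which $L=\Z_qL$ gives the pointwise bound. The only cosmetic difference is that the paper first introduces the global Young measure $(\mu_x)_{x\in\Omega}$ and applies the semicontinuity theorem on all of $\Omega$ before localizing, whereas you decompose the limit energy measure as $f\,dx+\mu_s$ and generate the Young measure only on the blow-up cube; the two bookkeeping schemes are interchangeable here.
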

Note that the condition (C$_{p,q}$) is a generalization of previous ``localization principles" (see \cite{acerbi-fusco84,K-P92,sychev99,sychev05a}). Our goal here is to study how (C$_{p,q}$) behave with respect to the relaxation.

In this paper we establish two extensions of Theorem \ref{lsc-theorem} to the case of the relaxation (see Theorems \ref{Main-Theorem} and \ref{Main-Theorem-Bis}). Roughly, the first theorem, i.e., Theorem \ref{Main-Theorem}, asserts that, when the effective domain $\LL$ of $L$ is closed, $L$ is continuous on $\LL$ and $q\geq p$, if (C$_{p,q}$) holds and if $W^{1,p}$-sobolev functions can be approximated by piecewise affine functions both in $L^p$-norm and in $\Z_qL$-energy, see (H$_{p,q}$) in the next section, then  $\overline{I}$ has an integral representation whose integrand is given the $W^{1,q}$-quasiconvexification of $L$, i.e., by $\Z_qL$. The second theorem, i.e., Theorem \ref{Main-Theorem-Bis}, which is a variant of the first one, says that if $L$ is continuous on the interior of $\LL$ and if $q\geq p$, then under additional assumptions related to the behavior of $L$ on the boundary of $\LL$, see (R$_1$), (R$_2$) and (R$_3$) in the next section, and under slight modifications of (C$_{p,q}$) and (H$_{p,q}$), see ($\widehat {\rm C}_{p,q}$) and ($\widehat {\rm H}_{p,q}$) in the next section, the functional $\overline{I}$ can be represented by an integral whose integrand is given by the lower semicontinuous envelope of $\Z_qL$ (see Remark \ref{Remark-lsc-ZqL}). Note that, in our context, to obtain the full integral representations of $\overline{I}$ in Theorems \ref{Main-Theorem} and \ref{Main-Theorem-Bis}, it seems difficult to avoid assumption (H$_{p,q}$) or its variant ($\widehat {\rm H}_{p,q}$) (such a fact was also pointed out in \cite{sychev11}). However, partial integral representations of $\overline{I}$ on piecewise affine functions can be established under only (C$_{p,q}$) or its variant ($\widehat{\rm C}_{p,q}$) together with (R$_1$), (R$_2$) and (R$_3$) (see Theorems \ref{Main-Theorem}(a) and \ref{Main-Theorem-Bis}(a)).

The plan of the paper is as follows. In \S 2 we state the main results, i.e., Theorems \ref{Main-Theorem} and \ref{Main-Theorem-Bis}, that we prove respectively in \S 4 and \S 5. The proofs of Theorems \ref{Main-Theorem}(a) and \ref{Main-Theorem-Bis}(a) use in a fundamental way Young measure theory whose brief summary are given in \S 3.1. The proof of Theorem \ref{Main-Theorem-Bis} also need the concept of radially uniformly upper semicontinuous integrand, recently introduced in \cite{oah10}, whose definition together with some facts are recalled in \S3.2. Finally, in \S 6 we apply our two main theorems to the relaxation of nonconvex integrals with exponential-growth (see Corollary \ref{Applications-Of-Main-Theorems}).
 
\section{Main results}

Let $m,N\geq 1$ be two integers, let $\Omega\subset\RR^N$ be a bounded open set with Lipschitz boundary and let $\MM$ denote the space of all real $m\times N$ matrices. Let $p\in]1,\infty[$, let $L:\MM\to[0,\infty]$ be a Borel measurable function which is $p$-coercive, i.e., $L(\cdot)\geq C|\cdot|^p$ for some $C>0$, and let $I:W^{1,p}(\Omega;\RR^m)\to[0,\infty]$ be defined by
$$
I(u):=\int_\Omega L(\nabla u(x))dx.
$$
Denote the lower semicontinuous envelope (or the ``relaxed functional") of $I$ with respect to the strong convergence in $L^p(\Omega;\RR^m)$ by $\overline{I}:W^{1,p}(\Omega;\RR^m)\to[0,\infty]$, i.e.,
$$
\overline{I}(u):=\inf\left\{\liminf_{n\to\infty}I(u_n):u_n\to u\hbox{ in }L^p(\Omega;\RR^m)\right\},
$$
and for each $q\in]1,\infty]$, define $\Z_qL:\MM\to[0,\infty]$ by
$$
\Z_qL(\xi):=\inf\left\{\int_YL(\nabla \phi(y))dy:\phi\in l_\xi+W^{1,q}_0(Y;\RR^m)\right\},
$$
where $Y:=]-{1\over 2},{1\over 2}[^N$ and $l_\xi(y):=\xi y$. Usually, $\Z_qL$ is called the $W^{1,q}$-quasiconvexi-fication of $L$.  Given $p\in]1,\infty[$ and $q\in]1,\infty]$, we consider two general conditions on $L$, i.e., (C$_{p,q}$) stated in the introduction  and (H$_{p,q}$) below.
 (In what follows, $\Aff(\Omega;\RR^m)$ denotes the space of continuous piecewise affine functions from $\Omega$ to $\RR^m$.)
\begin{itemize}
\item[(H$_{p,q}$)] For every $u\in W^{1,p}(\Omega;\RR^m)\setminus\Aff(\Omega;\RR^m)$ such that
$$
\int_\Omega\Z_qL(\nabla u(x))dx<\infty,
$$
there exists $\{u_n\}_n\subset\Aff(\Omega;\RR^m)$ such that
$$
\left\{
\begin{array}{l}
u_n\to u\hbox{ in }L^p(\Omega;\RR^m)\\
\displaystyle\limsup_{n\to\infty}\int_\Omega\Z_qL(\nabla u_n(x))dx\leq\int_\Omega\Z_qL(\nabla u(x))dx.
\end{array}
\right.
$$
\end{itemize}

Denote the effective domain of $L$ by $\LL$. The first main result of the paper is the following. 

\begin{theorem}\label{Main-Theorem}
Consider $p\in]1,\infty[$ and $q\in[p,\infty]$ and assume that $\LL$ is closed and $L$ is continuous on $\LL$.
\begin{itemize}
\item[(a)] If  {\rm (C$_{p,q}$)} holds then for every $u\in \Aff(\Omega;\RR^m)$
\begin{equation}\label{Eq-Th-1}
\overline{I}(u)=\int_\Omega\Z_qL(\nabla u(x))dx
\end{equation}
\item[(b)] If  {\rm (C$_{p,q}$)} and {\rm (H$_{p,q}$)} are satisfied then \eqref{Eq-Th-1} holds for all $u\in W^{1,p}(\Omega;\RR^m)$.
\end{itemize}
\end{theorem}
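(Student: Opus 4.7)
I will establish both $\overline{I}(u)\leq\int_\Omega\Z_qL(\nabla u)\,dx$ and $\overline{I}(u)\geq\int_\Omega\Z_qL(\nabla u)\,dx$. The lower bound can be proved for every $u\in W^{1,p}(\Omega;\RR^m)$ through (C$_{p,q}$) and Young measures, so it serves both (a) and (b) simultaneously. The upper bound is first obtained on $\Aff(\Omega;\RR^m)$ by a standard tiling construction and then transferred to arbitrary $u$ through (H$_{p,q}$).

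\textbf{Upper bound on $\Aff(\Omega;\RR^m)$.} It suffices to treat one affine cell $A\subset\Omega$ of constant gradient $\xi$ with $\Z_qL(\xi)<\infty$. Pick $\phi_n\in l_\xi+W^{1,q}_0(Y;\RR^m)$ almost realising $\Z_qL(\xi)$, set $\psi_n:=\phi_n-l_\xi\in W^{1,q}_0(Y;\RR^m)$ and extend by $0$. Tile $A$ up to a negligible set by disjoint translates $\eps Y+x_k^\eps$ and define $u_n^\eps(x):=\xi x+\eps\psi_n((x-x_k^\eps)/\eps)$. Since $q\geq p$, Poincar\'e and scaling give $\|u_n^\eps-l_\xi\|_{L^p(A)}\leq C\eps\|\nabla\psi_n\|_{L^p(Y)}\to 0$ as $\eps\to 0$, while a change of variables yields $\int_A L(\nabla u_n^\eps)\,dx\to|A|\int_Y L(\nabla\phi_n)\,dy$. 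Summing over the cells of $u$ and diagonalising in $(n,\eps)$ delivers a recovery sequence, proving part (a).

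\textbf{Lower bound for arbitrary $u\in W^{1,p}$.} Let $u_n\to u$ in $L^p$ with $\liminf_n\int_\Omega L(\nabla u_n)\,dx<\infty$. By $p$-coercivity, $\{u_n\}$ is bounded in $W^{1,p}$, so after extraction $\{\nabla u_n\}$ generates a Young measure $(\nu_x)_{x\in\Omega}$ with barycenter $\nabla u(x)$ a.e. Since $L$ is lower semicontinuous on $\MM$ (continuous on the closed set $\LL$, $+\infty$ outside),
$$
\liminf_{n\to\infty}\int_\Omega L(\nabla u_n)\,dx\;\geq\;\int_\Omega\langle\nu_x,L\rangle\,dx.
$$
At a.e.\ $x_0\in\Omega$ which is simultaneously a Lebesgue point of $\nabla u$ and of the weak-$*$ limiting measure $\mu$ of $L(\nabla u_n)\,dx$, the Young-measure localization lemma (applied to the blow-ups $u_n^\rho(y):=\rho^{-1}(u_n(x_0+\rho y)-u(x_0))$ with a diagonal extraction in $(n,\rho)$) yields $\{v_n\}\subset W^{1,p}(Y;\RR^m)$ such that $v_n\to l_{\nabla u(x_0)}$ in $L^p(Y)$, $\{\nabla v_n\}$ generates the homogeneous slice $\nu_{x_0}$, and $\sup_n\int_Y L(\nabla v_n)\,dy<\infty$. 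Applying (C$_{p,q}$) produces $\{w_n\}\subset l_{\nabla u(x_0)}+W^{1,q}_0(Y;\RR^m)$ with $|\nabla v_n-\nabla w_n|\to 0$ in measure and $\{L(\nabla w_n)\}$ equi-integrable. Stability of Young measures under convergence in measure of gradients preserves the generation of $\nu_{x_0}$, and the fundamental theorem of Young measures combined with equi-integrability and continuity of $L$ on $\supp(\nu_{x_0})\subset\LL$ gives $\int_Y L(\nabla w_n)\,dy\to\langle\nu_{x_0},L\rangle$. Since each $w_n$ is admissible for $\Z_qL(\nabla u(x_0))$, we conclude $\langle\nu_{x_0},L\rangle\geq\Z_qL(\nabla u(x_0))$, and integration over $\Omega$ finishes the lower bound.

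\textbf{Upper bound for general $u$ (part (b)) and main obstacle.} If $\int_\Omega\Z_qL(\nabla u)\,dx=\infty$ the inequality is trivial; otherwise (H$_{p,q}$) furnishes $\{u_n\}\subset\Aff(\Omega;\RR^m)$ with $u_n\to u$ in $L^p$ and $\limsup_n\int_\Omega\Z_qL(\nabla u_n)\,dx\leq\int_\Omega\Z_qL(\nabla u)\,dx$. Part (a) gives $\overline{I}(u_n)=\int_\Omega\Z_qL(\nabla u_n)\,dx$, and $L^p$-lower semicontinuity of $\overline{I}$ yields $\overline{I}(u)\leq\liminf_n\overline{I}(u_n)$, concluding (b). The main technical obstacle is the Young-measure localization step: because no growth hypothesis is imposed on $L$, the diagonal extraction $\rho=\rho(n)\to 0$ must deliver simultaneously the $L^p$-convergence of $v_n$ to $l_{\nabla u(x_0)}$, the generation of the slice $\nu_{x_0}$, and the uniform bound on $\int_Y L(\nabla v_n)\,dy$; securing this triple requires choosing $x_0$ via Lebesgue differentiation for the measure $\mu$ and is what ultimately makes (C$_{p,q}$) applicable.
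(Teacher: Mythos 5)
Your proposal is correct and follows essentially the same route as the paper: the lower bound for all $u\in W^{1,p}$ via blow-up, Young measures, (C$_{p,q}$) and the continuity theorem; the upper bound on $\Aff(\Omega;\RR^m)$ via an energy-realising tiling construction (the content of the paper's Lemma \ref{Localization-Lemma-ZG}); and part (b) by combining (H$_{p,q}$) with part (a), where your appeal to the $L^p$-lower semicontinuity of $\overline{I}$ is an equivalent shortcut for the paper's explicit diagonalization. The only detail to polish is the tiling: a general affine cell cannot be exhausted up to a null set by translates of a single cube $\eps Y$, so one should use a Vitali covering by scaled copies $\eps_k Y+x_k$ (this matters because $\Z_qL(\xi)<\infty$ does not force $L(\xi)<\infty$, so the leftover set cannot simply carry the affine map).
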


In order to take more general situations into account like singular behavior of $L$ on $\partial\LL$ (see \cite{oah10,oah-jpm10b,oah-jpm10a} and also \cite{carbone-dearcangelis02} for the scalar case) we are led to replace  the assumption ``$\LL$ is closed and $L$ is continuous on $\LL$" by the weaker one ``$L$ is continuous on the interior of $\LL$". In our context, this can be done by considering the three additional conditons (R$_1$), (R$_2$) and (R$_3$) below together with slight modifications of {\rm (C$_{p,q}$)} and {\rm (H$_{p,q}$)}, i.e., {\rm ($\widehat {\rm C}_{p,q}$)} and {\rm ($\widehat {\rm H}_{p,q}$)} below.

\begin{itemize}
\item[(R$_1$)] $L$ is radially uniformly upper semicontinuous (ru-usc), i.e., there exists $c>0$ such that
$$
\limsup_{t\to1}\Delta^c_{L}(t)\leq0
$$ 
with $\Delta^c_{L}:[0,1]\to]-\infty,\infty]$ given by
\begin{equation}\label{Ru-usc-Z-hat-i}
\Delta^c_{L}(t):=\sup_{\xi\in\LL}{L(t\xi)-L(\xi)\over c+L(\xi)}.
\end{equation}
\end{itemize}
(The concept of ru-usc integrand was introduced by Anza Hafsa in \cite{oah10}, see also \cite{oah-jpm10b,oah-jpm10a}.) Note that if $L$ is ru-usc, also is $\Z_qL$ (see Proposition \ref{Coro-Prop-ru-usc1}). In what follows, given any $\DD\subset \MM$, ${\rm int}(\DD)$ (resp. $\overline{\DD}$) denotes the interior (resp. the closure) of $\DD$.

\begin{itemize}
\item[(R$_2$)] $t\overline{\LL}\subset{\rm int}(\LL)$ for all $t\in]0,1[$.
\end{itemize}
Denote the effective domain of $\Z_qL$ by $\Z_q\LL$.
\begin{itemize} 
\item[(R$_3$)] $t\overline{\Z_q\LL}\subset{\rm int}(\Z_q\LL)$ for all $t\in]0,1[$. 
\end{itemize}
The following condition is a variant of (C$_{p,q}$).
\begin{itemize}
\item[($\widehat{\rm C}_{p,q}$)] For every $t\in]0,1[$, every $\xi\in\MM$ and every $\{v_n\}_n\subset W^{1,p}(Y;\RR^m)$ such that
$$
\left\{
\begin{array}{l}
v_n\to l_\xi\hbox{ in }L^p(Y;\RR^m)\\
\displaystyle\sup_n\int_YL(\nabla v_n(y))dy<\infty,\\
\end{array}
\right.
$$
there exist a subsequence $\{v_n\}_n$ (not relabeled) and a sequence $\{w_n\}_n\subset l_{t\xi}+W^{1,q}_0(Y;\RR^m)$ and there is $s\in]0,1[$ such that
$$
\left\{
\begin{array}{l}
\displaystyle\left|t\nabla v_n-\nabla w_n\right|\to0\hbox{ in measure}\\
\displaystyle\left\{L(\nabla w_n)\right\}_n\hbox{ is equi-integrable}\\
\nabla w_n(y)\in s\overline{\LL}\hbox{ for all }n\geq 1\hbox{ and a.a. }y\in Y.
\end{array}
\right.
$$
\end{itemize}
The following condition is a variant of (H$_{p,q}$).
\begin{itemize}
\item[($\widehat{\rm H}_{p,q}$)] For every $t\in]0,1[$ and every $u\in W^{1,p}(\Omega;\RR^m)\setminus\Aff(\Omega;\RR^m)$ such that
$$
\left\{
\begin{array}{l}
\displaystyle\int_\Omega\Z_qL(\nabla u(x))dx<\infty\\
\nabla u(x)\in t\overline{\Z_q\LL}\hbox{ for a.a. }x\in \Omega,
\end{array}
\right.
$$
there exists $\{u_n\}_n\subset\Aff(\Omega;\RR^m)$ such that
$$
\left\{
\begin{array}{l}
u_n\to u\hbox{ in }L^p(\Omega;\RR^m)\\
\displaystyle\limsup_{n\to\infty}\int_\Omega\Z_qL(\nabla u_n(x))dx\leq\int_\Omega\Z_qL(\nabla u(x))dx.
\end{array}
\right.
$$
\end{itemize}
The second main result of the paper is the following. 

\begin{theorem}\label{Main-Theorem-Bis}
Consider $p\in]1,\infty[$ and $q\in[p,\infty]$ and assume that $L$ is continuous on ${\rm int}(\LL)$ and {\rm(R$_1$)}, {\rm(R$_2$)} and {\rm(R$_3$)} are verified.
\begin{itemize}
\item[(a)] If {\rm ($\widehat{\rm C}_{p,q}$)} holds then for every $u\in\Aff(\Omega;\RR^m)$
\begin{equation}\label{Eq-Th2}
\overline{I}(u)=\int_\Omega\widehat{\Z_qL}(\nabla u(x))dx
\end{equation}
with $\widehat{\Z_qL}:\MM\to[0,\infty]$ given by 
\begin{equation}\label{Def-Z-hat-i}
\widehat{\Z_qL}(\xi):=\liminf_{t\to1}\Z_qL(t\xi).
\end{equation}
\item[(b)] If {\rm ($\widehat{\rm C}_{p,q}$)} and {\rm ($\widehat{\rm H}_{p,q}$)} are satisfied then \eqref{Eq-Th2} holds for all $u\in W^{1,p}(\Omega;\RR^m)$.
\end{itemize}
\end{theorem}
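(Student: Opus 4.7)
The plan is to prove both bounds of $\overline{I}(u)=\int_\Omega\widehat{\Z_qL}(\nabla u(x))dx$ in part (a) for piecewise affine $u$, and then bootstrap to general $u\in W^{1,p}(\Omega;\RR^m)$ in part (b) by combining $(\widehat{\rm H}_{p,q})$ with the ru-usc property and a scaling argument. For the upper bound in (a), let $u|_{\Omega_i}=l_{\xi_i}$ on a finite partition $\{\Omega_i\}$ with $\int_\Omega\widehat{\Z_qL}(\nabla u)dx<\infty$. Using $\widehat{\Z_qL}(\xi_i)=\liminf_{t\to 1}\Z_qL(t\xi_i)$, for each $\epsilon>0$ I pick $t_\epsilon^i\in(1-\epsilon,1)$ with $\Z_qL(t_\epsilon^i\xi_i)\leq\widehat{\Z_qL}(\xi_i)+\epsilon$ and a near-optimal $\phi_\epsilon^i\in l_{t_\epsilon^i\xi_i}+W^{1,q}_0(Y;\RR^m)$. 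A Vitali covering of each $\Omega_i$ by small cubes combined with periodic extension of $\phi_\epsilon^i-l_{t_\epsilon^i\xi_i}$ produces $u_n^{(\epsilon)}\in W^{1,p}(\Omega;\RR^m)$ with $u_n^{(\epsilon)}\to v^{(\epsilon)}$ in $L^p$, where $v^{(\epsilon)}|_{\Omega_i}=l_{t_\epsilon^i\xi_i}$, and $\int L(\nabla u_n^{(\epsilon)})dx\to\sum_i|\Omega_i|\int_Y L(\nabla\phi_\epsilon^i)dy$; the hypothesis $q\geq p$ makes the boundary correction near $\partial\Omega_i$ harmless in $L^p$-norm. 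Since $v^{(\epsilon)}\to u$ in $L^p$ as $\epsilon\to 0$, a diagonal extraction delivers the recovery sequence.

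For the lower bound in (a), take $u_n\to u$ in $L^p$ with $\int L(\nabla u_n)dx\to\overline{I}(u)<\infty$ and extract a subsequence generating a Young measure $\{\nu_x\}$ with barycentre $\nabla u(x)$ a.e. On each $\Omega_i$, at a.e.\ $x_0\in\Omega_i$ I localise and rescale to produce $v_n\to l_{\xi_i}$ in $L^p(Y;\RR^m)$ with $\sup_n\int_Y L(\nabla v_n)dy<\infty$. For each $t\in(0,1)$, $(\widehat{\rm C}_{p,q})$ provides (up to a subsequence) $\{w_n\}\subset l_{t\xi_i}+W^{1,q}_0(Y;\RR^m)$ with $|t\nabla v_n-\nabla w_n|\to 0$ in measure, $\{L(\nabla w_n)\}$ equi-integrable, and $\nabla w_n\in s\overline{\LL}\subset{\rm int}(\LL)$ by (R$_2$); hence $\Z_qL(t\xi_i)\leq\int_Y L(\nabla w_n)dy$. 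Continuity of $L$ on ${\rm int}(\LL)$, equi-integrability, and convergence in measure yield $\lim_n(\int_Y L(\nabla w_n)dy-\int_Y L(t\nabla v_n)dy)=0$, while the ru-usc estimate (R$_1$) gives $\int_Y L(t\nabla v_n)dy\leq(1+\Delta^c_L(t))\int_Y L(\nabla v_n)dy+c\Delta^c_L(t)$. Taking liminf in $n$ and then $t\to 1$ yields $\widehat{\Z_qL}(\xi_i)\leq\liminf_n\int_Y L(\nabla v_n)dy$; integrating over $\Omega_i$ and summing over $i$ gives $\int_\Omega\widehat{\Z_qL}(\nabla u)dx\leq\overline{I}(u)$.

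For part (b), the lower bound is obtained by repeating the part (a) argument, since the blow-up at any Lebesgue point of $\nabla u$ only uses approximate constancy of $\nabla u$ there. For the upper bound assume $\int_\Omega\widehat{\Z_qL}(\nabla u)dx<\infty$, so $\nabla u\in\overline{\Z_q\LL}$ a.e. For $t\in(0,1)$, (R$_3$) yields $t\nabla u\in t\overline{\Z_q\LL}\subset{\rm int}(\Z_q\LL)$ a.e.; combining the ru-usc of $\Z_qL$ (Proposition \ref{Coro-Prop-ru-usc1}) with the liminf definition of $\widehat{\Z_qL}$ (apply ru-usc along a sequence $s_k\to 1$ realising $\widehat{\Z_qL}(\xi)$ and pass to the limit in $k$) gives the pointwise bound $\Z_qL(t\xi)\leq(1+\Delta^c_{\Z_qL}(t))\widehat{\Z_qL}(\xi)+c\Delta^c_{\Z_qL}(t)$ on $\overline{\Z_q\LL}$, so $\int\Z_qL(t\nabla u)dx<\infty$. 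Since $\nabla(tu)\in t\overline{\Z_q\LL}$, $(\widehat{\rm H}_{p,q})$ applies to $tu$ producing $u_n^t\in\Aff(\Omega;\RR^m)$ with $u_n^t\to tu$ in $L^p$ and $\limsup_n\int\Z_qL(\nabla u_n^t)dx\leq\int\Z_qL(t\nabla u)dx$. Part (a) gives $\overline{I}(u_n^t)=\int\widehat{\Z_qL}(\nabla u_n^t)dx\leq\int\Z_qL(\nabla u_n^t)dx$, so $L^p$-lsc of $\overline{I}$ yields $\overline{I}(tu)\leq\int\Z_qL(t\nabla u)dx\leq(1+\Delta^c_{\Z_qL}(t))\int\widehat{\Z_qL}(\nabla u)dx+c\Delta^c_{\Z_qL}(t)|\Omega|$. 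One more application of $L^p$-lsc as $t\to 1$ delivers $\overline{I}(u)\leq\int_\Omega\widehat{\Z_qL}(\nabla u)dx$.

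The main obstacle is the transfer estimate in the lower bound: passing from $\int_Y L(\nabla w_n)dy$ (the well-behaved sequence from $(\widehat{\rm C}_{p,q})$) to $\int_Y L(\nabla v_n)dy$ (the original rescaled sequence), even though $L(\nabla v_n)$ need not be equi-integrable and $\nabla v_n$ may touch $\partial\LL$. The ru-usc inequality (R$_1$) transports the estimate from $t\nabla v_n$ back to $\nabla v_n$, while the confinement $\nabla w_n\in s\overline{\LL}\subset{\rm int}(\LL)$ together with the continuity of $L$ there and the equi-integrability makes $L(\nabla w_n)$ and $L(t\nabla v_n)$ interchangeable in the limit; juggling these ingredients in the correct order of limits ($n\to\infty$ first, then $t\to 1$) is where the argument demands the most care.
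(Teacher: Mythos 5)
Your overall architecture is the same as the paper's (blow-up plus Young measures plus $(\widehat{\rm C}_{p,q})$ for the lower bound; scaling toward the interior of the domain plus $(\widehat{\rm H}_{p,q})$ and a piecewise-affine recovery sequence for the upper bound), but there is one genuine gap and one imprecision.

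The gap is in your upper bound for part (a). You choose a \emph{different} scaling parameter $t^i_\epsilon$ on each affine piece $\Omega_i$, so your target $v^{(\epsilon)}$, equal to $l_{t^i_\epsilon\xi_i}$ on $\Omega_i$, is discontinuous across the interfaces $\partial\Omega_i\cap\partial\Omega_j$ (the traces of $l_{t^i_\epsilon\xi_i}$ and $l_{t^j_\epsilon\xi_j}$ do not match there unless $t^i_\epsilon=t^j_\epsilon$). Since the oscillations you superpose are compactly supported in small cubes, $u_n^{(\epsilon)}$ inherits these jumps and is therefore \emph{not} in $W^{1,p}(\Omega;\RR^m)$; it is not an admissible competitor in the definition of $\overline{I}(u)$. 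Gluing across a transition layer is not available here: $L$ is unbounded (possibly $+\infty$), there is no upper growth condition, and the interpolated gradients in the layer need not stay in $\LL$, so their energy is uncontrolled. This is precisely why the paper takes the other route: by (R$_1$), Proposition \ref{Coro-Prop-ru-usc1} and Theorem \ref{Extension-Result-for-ru-usc-Functions}, $\widehat{\Z_qL}$ is ru-usc, which upgrades the pointwise $\liminf$ in \eqref{Def-Z-hat-i} to the \emph{uniform} bound $\int_\Omega\Z_qL(t\nabla u)\,dx\le(1+\Delta^c_{\widehat{\Z_qL}}(t))\int_\Omega\widehat{\Z_qL}(\nabla u)\,dx+c|\Omega|\Delta^c_{\widehat{\Z_qL}}(t)$; one then uses a \emph{single} $t$ for all pieces, so that $tu\in\Aff(\Omega;\RR^m)$ is continuous, applies Lemma \ref{Z_qG-Second-Lemma} to $tu$, and diagonalizes as $t\to1$. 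Note that without ru-usc you cannot even pick one $t$ close to $1$ that nearly realizes the $\liminf$ for all the finitely many $\xi_i$ simultaneously, since the $\liminf$'s may be attained along incompatible sequences; this is exactly what (R$_1$) buys, and your construction cannot bypass it. The same repair propagates to part (b), where your use of $(\widehat{\rm H}_{p,q})$, the bound $\Z_qL(t\xi)\le(1+\Delta^c(t))\widehat{\Z_qL}(\xi)+c\Delta^c(t)$, and the $L^p$-lower semicontinuity of $\overline{I}$ in place of the paper's explicit triple diagonalization are all correct and slightly cleaner.

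The imprecision is in your lower bound: the claimed two-sided limit $\lim_n\bigl(\int_YL(\nabla w_n)\,dy-\int_YL(t\nabla v_n)\,dy\bigr)=0$ is not justified, because equi-integrability is only known for $\{L(\nabla w_n)\}_n$, not for $\{L(t\nabla v_n)\}_n$, which may concentrate. What you actually need, and what does follow from your ingredients, is the one-sided inequality $\liminf_n\int_YL(t\nabla v_n)\,dy\ge\lim_n\int_YL(\nabla w_n)\,dy$: both sequences generate the same Young measure by Lemma \ref{Lemma1-YM}, the semicontinuity theorem (Theorem \ref{S-T-YM}) applies to $\{t\nabla v_n\}_n$ because $t\nabla v_n\in t\overline{\LL}\subset{\rm int}(\LL)$ by (R$_2$) and $t\overline{\LL}$ is closed, and the continuity theorem (Theorem \ref{C-T-YM}) applies to $\{\nabla w_n\}_n$ because of the confinement $\nabla w_n\in s\overline{\LL}$ and the equi-integrability. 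With that correction your order of operations (ru-usc applied locally on $Y$ after blow-up, rather than globally on $\Omega$ before blow-up as in the paper) is legitimate.
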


\begin{remark}\label{Remark-lsc-ZqL}
According to Lemma \ref{Fonseca-Lemma} and Theorem \ref{Extension-Result-for-ru-usc-Functions}, we see that, in Theorem \ref{Main-Theorem-Bis}, the function $\widehat{\Z_qL}$ is in fact the lower semicontinuous envelope of $\Z_qL$ and has the representation
$$
\widehat{\Z_qL}(\xi)=\left\{
\begin{array}{ll}
\Z_qL(\xi)&\hbox{if }\xi\in{\rm int}(\Z_q\LL)\\
\lim\limits_{t\to 1}\Z_qL(t\xi)&\hbox{if }\xi\in\partial(\Z_q\LL)\\
\infty&\hbox{otherwise.}
\end{array}
\right.
$$
\end{remark}


\section{Auxiliary results}

\subsection{Some facts on Young measures}

Young measures were introduced by Young in 1937 (see \cite{young37}) with the purpose of finding an extension of the class of Sobolev functions for which one-dimensional nonconvex variational problems become solvable. In the context of the multidimensional calculus of variations, Kinderleherer and Pedregal (see \cite{K-P92,K-P94}) and independently Kristensen (see \cite{kristensen94}) were the first to use Young measures for dealing with lower semicontinuity problems. Relaxation and convergence in energy problems were studied for the first time by Sychev via Young measures following a new approach to Young measures that he introduced in \cite{sychev99}. Here we only recall the ingredients that we need for proving Theorems \ref{Main-Theorem} and \ref{Main-Theorem-Bis}. For more details on Young measure theory and its applications to the calculus of variations we refer to \cite{pedregal97,pedregal00,sychev04b}.

Let $\mathcal{P}(\MM)$ be the  set of all probability measures on $\MM$, let $C(\MM)$ be the space of all continuous functions from $\MM$ to $\RR$ and let
$$
C_0(\MM):=\Big\{\Phi\in C(\MM):\lim_{|\xi|\to\infty}\Phi(\xi)=0\Big\}.
$$ 

Here is the definition of a Young measure.

\begin{definition}\label{Def-of-Young-Measures}
A family $(\mu_x)_{x\in\Omega}$ of probability measures on $\MM$, i.e., $\mu_x\in\mathcal{P}(\MM)$ for all $x\in\Omega$, is said to be a Young measure if there exists a sequence $\{\xi_n\}_n$ of measurable functions from $\Omega$ to $\MM$ such that
$$
\Phi(\xi_n)\mwto\langle\Phi;\mu_{(\cdot)}\rangle\hbox{ in }L^\infty(\Omega)\hbox{ for all }\Phi\in C_0(\MM)
$$
with $\langle\Phi;\mu_{(\cdot)}\rangle:=\int_{\MM}\Phi(\zeta)d\mu_{(\cdot)}(\zeta)$. In this case, we say that $\{\xi_n\}_n$ generates $(\mu_x)_{x\in\Omega}$ as a Young measure.
\end{definition}

The following lemma makes clear the link between convergence in measure and Young measures. (The proof follows from the definition.)

\begin{lemma}\label{Lemma1-YM}
let $\{\xi_n\}_n$ and $\{\zeta_n\}_n$ be two sequences of measurable functions from $\Omega$ to $\MM$. If $\{\xi_n\}_n$ generates a Young measure and if $|\xi_n-\zeta_n|\to0$ in measure then $\{\zeta_n\}_n$ generates the same Young measure.
\end{lemma}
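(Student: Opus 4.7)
The plan is to reduce everything to the sequence $\{\xi_n\}_n$ via the uniform continuity of the test functions in $C_0(\MM)$. By Definition \ref{Def-of-Young-Measures}, I must verify that $\Phi(\zeta_n)\mwto\langle\Phi;\mu_{(\cdot)}\rangle$ in $L^\infty(\Omega)$ for every $\Phi\in C_0(\MM)$. Since the corresponding convergence holds for $\{\xi_n\}_n$ by hypothesis, and since weak-$\ast$ convergence in $L^\infty(\Omega)$ is tested against $L^1(\Omega)$, it is enough to fix an arbitrary $\psi\in L^1(\Omega)$ and prove
$$
\int_\Omega\bigl[\Phi(\xi_n(x))-\Phi(\zeta_n(x))\bigr]\psi(x)\,dx\longrightarrow 0.
$$
Combining this with the hypothesis will then immediately give $\Phi(\zeta_n)\mwto\langle\Phi;\mu_{(\cdot)}\rangle$, which is exactly the required convergence.

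For the key estimate, I would exploit that every $\Phi\in C_0(\MM)$ is both bounded and uniformly continuous on $\MM$. Given $\varepsilon>0$, choose $\delta>0$ such that $|\xi-\zeta|<\delta$ implies $|\Phi(\xi)-\Phi(\zeta)|<\varepsilon$, and set $E_n:=\{x\in\Omega:|\xi_n(x)-\zeta_n(x)|\geq\delta\}$. Splitting the integral over $E_n$ and $\Omega\setminus E_n$, the integrand is bounded by $\varepsilon|\psi(x)|$ on $\Omega\setminus E_n$ and by $2\|\Phi\|_\infty|\psi(x)|$ on $E_n$. By the convergence-in-measure hypothesis, $|E_n|\to 0$, and hence $\int_{E_n}|\psi|\,dx\to 0$ by the absolute continuity of the Lebesgue integral. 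It follows that
$$
\limsup_{n\to\infty}\left|\int_\Omega\bigl[\Phi(\xi_n)-\Phi(\zeta_n)\bigr]\psi\,dx\right|\leq\varepsilon\|\psi\|_{L^1(\Omega)},
$$
and letting $\varepsilon\downarrow 0$ concludes the argument.

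There is no real obstacle here: the only point that must be noticed is that membership in $C_0(\MM)$ forces uniform continuity on all of $\MM$, which is what bridges the convergence in measure with the weak-$\ast$ convergence; the rest is a textbook splitting argument.
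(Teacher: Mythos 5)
Your proof is correct; the paper omits the argument entirely, remarking only that the lemma ``follows from the definition.'' Your splitting argument via the uniform continuity of $\Phi\in C_0(\MM)$ and the absolute continuity of $\int|\psi|$ is exactly the standard way to carry that out, so there is nothing to compare beyond noting that you have supplied the details the paper leaves implicit.
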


The following theorem gives a sufficient condition for proving the existence of Young measures (for a proof see \cite{ball89,sychev04b,fonseca-leoni07}).

\begin{theorem}\label{ExiStenCE-Young-measures}
Let $\theta:\MM\to\RR$ be a continuous function such that 
$
\lim_{|\zeta|\to\infty}\theta(\zeta)=\infty
$
and let $\{\xi_n\}_n$ be a sequence of measurable functions from $\Omega$ to $\MM$ such that
$$
\sup_{n}\int_\Omega \theta(\xi_n(x))dx<\infty.
$$
Then,  $\{\xi_n\}_n$ contains a subsequence generating a Young measure.
\end{theorem}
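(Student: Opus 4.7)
The plan is to build the candidate Young measure by weak-$*$ compactness and then use the coercivity of $\theta$ to verify that the resulting measures are probabilities. First I would invoke the separability of $C_0(\MM)$ to fix a countable dense family $\{\Phi_k\}_k$. Since each $\Phi_k$ is bounded, the sequence $\{\Phi_k(\xi_n)\}_n$ is bounded in $L^\infty(\Omega)$, so by the Banach--Alaoglu theorem together with a diagonal extraction I obtain a subsequence (not relabeled) such that $\Phi_k(\xi_n)\mwto v_k$ in $L^\infty(\Omega)$ for every $k$. Using $\|\Phi(\xi_n)\|_{L^\infty}\leq\|\Phi\|_\infty$ and density in $C_0(\MM)$, this extends to a positive linear map $\Lambda:C_0(\MM)\to L^\infty(\Omega)$ with $\Phi(\xi_n)\mwto\Lambda(\Phi)$ for every $\Phi\in C_0(\MM)$.

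Next I would disintegrate $\Lambda$: by a measurable Riesz representation argument there exists a weak-$*$ measurable family $(\mu_x)_{x\in\Omega}$ of nonnegative Radon measures on $\MM$ such that $\Lambda(\Phi)(x)=\langle\Phi;\mu_x\rangle$ for a.a.\ $x\in\Omega$ and every $\Phi\in C_0(\MM)$, which is precisely the form of convergence required by Definition \ref{Def-of-Young-Measures}. This step is technically the most delicate point, since one must simultaneously handle null sets for uncountably many test functions; the standard way out is to carry out the representation first against $\{\Phi_k\}_k$ and then extend by the uniform estimate $|\Lambda(\Phi)|\leq\|\Phi\|_\infty$, so that a single exceptional null set suffices.

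It remains to show $\mu_x(\MM)=1$ for a.a.\ $x$, which is where the hypothesis on $\theta$ enters. For each $R>0$ pick $\Phi_R\in C_0(\MM)$ with $0\leq\Phi_R\leq 1$ and $\Phi_R\equiv 1$ on $\{|\zeta|\leq R\}$. Writing $m_R:=\inf_{|\zeta|>R}\theta(\zeta)\to\infty$, Chebyshev's inequality gives
$$
\big|\{x\in\Omega:|\xi_n(x)|>R\}\big|\leq\frac{1}{m_R}\sup_n\int_\Omega\theta(\xi_n(x))dx,
$$
so for every measurable $A\subset\Omega$ one has $\int_A\Phi_R(\xi_n)dx\to|A|$ as $R\to\infty$, uniformly in $n$. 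Passing to the weak-$*$ limit in $n$ and then letting $R\to\infty$ by monotone convergence yields $\int_A\mu_x(\MM)dx=|A|$, so $\mu_x$ is a probability measure for a.e. $x$. By construction, $\{\xi_n\}_n$ generates $(\mu_x)_{x\in\Omega}$ in the sense of Definition \ref{Def-of-Young-Measures}, completing the proof. The main obstacle, as noted, is the measurable disintegration; the coercivity hypothesis plays its role only at the very end, ruling out escape of mass to infinity.
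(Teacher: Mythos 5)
Your argument is correct and is precisely the standard proof of the fundamental theorem on Young measures (Banach--Alaoglu plus diagonal extraction over a countable dense subset of $C_0(\MM)$, measurable disintegration via Riesz representation, and tightness from the coercivity of $\theta$ to rule out loss of mass), which is exactly the route taken in the references the paper cites for this result; the paper itself gives no proof. The only cosmetic point is that $\theta$ is not assumed nonnegative, so in the Chebyshev step you should first replace $\theta$ by $\theta+c\geq 0$ (possible since a continuous coercive function is bounded below), which changes the uniform bound only by $c|\Omega|$.
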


The following two theorems are important in dealing with integral functionals (for proofs see \cite{balder84,sychev99}).

\begin{theorem}[semicontinuity theorem]\label{S-T-YM}
Let $G:\MM\to[0,\infty]$ be a Borel measurable function which is continuous on a closed set $\DD\subset\MM$ and let $\{\xi_n\}_n$ be a sequence of measurable functions from $\Omega$ to $\MM$ such that 
$$
\left\{
\begin{array}{l}
\xi_n(x)\in\DD\hbox{ for all }n\geq 1\hbox{ and a.a. }x\in\Omega;\\
\{\xi_n\}_n\hbox{ generates }(\mu_x)_{x\in\Omega}\hbox{ as a Young measure.} 
\end{array}  
\right.
$$
Then 
$$
\liminf_{n\to\infty}\int_\Omega G(\xi_n(x))dx\geq\int_\Omega \langle G;\mu_x\rangle dx.
$$
\end{theorem}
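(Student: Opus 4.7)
\textbf{Proof plan for Theorem \ref{S-T-YM}.}

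The plan is a three-step reduction: truncate $G$ to make it bounded, extend the truncation continuously from $\DD$ to all of $\MM$ via Tietze, and multiply by a compactly supported cutoff so that the resulting integrand lies in $C_0(\MM)$ and the Young measure convergence from Definition \ref{Def-of-Young-Measures} applies directly.

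First, I would observe that because the generating sequence $\{\xi_n\}_n$ satisfies $\xi_n(x)\in\DD$ a.e. for every $n$, the Young measure $(\mu_x)_{x\in\Omega}$ is supported on $\DD$ for a.e.\ $x$. Indeed, if $\Psi\in C_0(\MM)$ vanishes on $\DD$, then $\Psi(\xi_n)\equiv 0$, so by Definition \ref{Def-of-Young-Measures}, $\langle\Psi;\mu_{(\cdot)}\rangle=0$ in $L^\infty(\Omega)$, and since $\DD$ is closed, standard approximation of $\mathbf{1}_{\MM\setminus\DD}$ by such $\Psi$ yields $\mu_x(\MM\setminus\DD)=0$ for a.e.\ $x$. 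In particular, for any Borel $F:\MM\to[0,\infty]$ whose values on $\DD$ agree with those of $G$, we have $\langle F;\mu_x\rangle=\langle G;\mu_x\rangle$ a.e.

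Next, for $k\geq 1$, set $G_k:=\min\{G,k\}$, so $G_k$ is bounded, Borel, and continuous on $\DD$. Since $\DD$ is closed in $\MM$ and $G_k|_\DD$ is a bounded continuous function into $[0,k]$, the Tietze extension theorem provides $\widetilde{G}_k\in C(\MM;[0,k])$ with $\widetilde{G}_k=G_k$ on $\DD$. Because $\xi_n(x)\in\DD$ a.e. and $\mu_x$ is supported on $\DD$, we have the two identities
\begin{equation*}
G(\xi_n(x))\geq G_k(\xi_n(x))=\widetilde{G}_k(\xi_n(x)) \quad \text{and} \quad \langle G_k;\mu_x\rangle=\langle\widetilde{G}_k;\mu_x\rangle.
\end{equation*}

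Now choose cutoffs $\psi_R\in C_0(\MM)$ with $0\leq\psi_R\leq 1$ and $\psi_R=1$ on the ball $B_R\subset\MM$, chosen increasing in $R$ so that $\psi_R\uparrow 1$ pointwise. Then $\widetilde{G}_k\psi_R\in C_0(\MM)$, and by the definition of a Young measure,
\begin{equation*}
\int_\Omega \widetilde{G}_k(\xi_n(x))\psi_R(\xi_n(x))\,dx \;\longrightarrow\; \int_\Omega\langle\widetilde{G}_k\psi_R;\mu_x\rangle\,dx
\end{equation*}
as $n\to\infty$ (by testing against the constant $1\in L^1(\Omega)$, since $\Omega$ is bounded). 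Combining with $G(\xi_n)\geq\widetilde{G}_k(\xi_n)\geq\widetilde{G}_k(\xi_n)\psi_R(\xi_n)$ gives
\begin{equation*}
\liminf_{n\to\infty}\int_\Omega G(\xi_n(x))\,dx \;\geq\; \int_\Omega\langle\widetilde{G}_k\psi_R;\mu_x\rangle\,dx.
\end{equation*}

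Finally I would let $R\to\infty$ and then $k\to\infty$. Since $\widetilde{G}_k\psi_R\uparrow\widetilde{G}_k$ pointwise on $\MM$ and $\mu_x\in\mathcal{P}(\MM)$, monotone convergence yields $\langle\widetilde{G}_k\psi_R;\mu_x\rangle\uparrow\langle\widetilde{G}_k;\mu_x\rangle=\langle G_k;\mu_x\rangle$ a.e., and a second application (in $x$) gives the inequality with $\langle G_k;\mu_x\rangle$ on the right. Since $G_k\uparrow G$ on $\DD$ and $\mu_x$ concentrates on $\DD$, a third monotone convergence in $k$ produces $\langle G;\mu_x\rangle$ and the claimed bound. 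The only subtle point, and the one I would check carefully, is the passage $\mu_x(\MM\setminus\DD)=0$ a.e., which relies on the closedness of $\DD$ and a separability argument for the family of test functions in $C_0(\MM)$ vanishing on $\DD$; every remaining step is a routine monotone convergence.
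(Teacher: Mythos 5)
Your argument is correct: the paper does not prove Theorem \ref{S-T-YM} itself but refers to \cite{balder84,sychev99}, and your truncation--Tietze--cutoff scheme followed by the three monotone-convergence passages is precisely the classical proof of the lower-semicontinuity half of the fundamental theorem of Young measures found in those references. The one point you flagged, $\mu_x(\MM\setminus\DD)=0$ for a.e.\ $x$, needs no separability argument for the whole family of admissible test functions: since $\MM\setminus\DD$ is open, a single countable increasing sequence $\Psi_j\in C_0(\MM)$ vanishing on $\DD$ with $\Psi_j\uparrow\mathbf{1}_{\MM\setminus\DD}$ suffices (discard one null set per $j$, then apply monotone convergence).
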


\begin{theorem}[continuity theorem]\label{C-T-YM}
Let $G:\MM\to[0,\infty]$ be a Borel measurable function which is continuous on a closed set $\DD\subset\MM$ and let $\{\xi_n\}_n$ be a sequence of measurable functions from $\Omega$ to $\MM$ such that 
$$
\left\{
\begin{array}{l}
\xi_n(x)\in\DD\hbox{ for all }n\geq 1\hbox{ and a.a. }x\in\Omega;\\
\{\xi_n\}_n\hbox{ generates }(\mu_x)_{x\in\Omega}\hbox{ as a Young measure.} 
\end{array}  
\right.
$$
Then 
$$
\lim\limits_{n\to\infty}\int_\Omega G(\xi_n(x))dx=\int_\Omega\langle G;\mu_x\rangle dx<\infty
$$
if and only if $\{G(\xi_n)\}_n$ is equi-integrable.
\end{theorem}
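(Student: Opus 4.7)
The plan is to prove both implications by truncating $G$ at height $k$. Set $G_k:=G\wedge k$, which is nonnegative, bounded by $k$, and continuous on $\DD$; then $(G-G_k)(\xi)=(G(\xi)-k)^+$ and $G_k\uparrow G$ pointwise as $k\to\infty$. The central intermediate fact I would establish first is that, for every fixed $k$,
$$
\lim_{n\to\infty}\int_\Omega G_k(\xi_n(x))dx=\int_\Omega\langle G_k;\mu_x\rangle dx. \qquad(\ast)
$$
Indeed, $G_k$ meets the hypotheses of Theorem \ref{S-T-YM} and supplies the $\liminf\geq$ inequality, while applying Theorem \ref{S-T-YM} to the nonnegative bounded continuous function $k-G_k$ on $\DD$ gives $k|\Omega|-\limsup_n\int G_k(\xi_n)dx\geq k|\Omega|-\int\langle G_k;\mu_x\rangle dx$, i.e., the matching $\limsup\leq$ inequality (all quantities are finite since $0\leq G_k\leq k$).

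For the ``$\Leftarrow$'' direction, suppose $\{G(\xi_n)\}_n$ is equi-integrable; then on a bounded domain $\sup_n\int G(\xi_n)dx<\infty$ and for each $\epsilon>0$ there exists $k_0$ with $\sup_n\int_\Omega(G(\xi_n)-k_0)^+dx<\epsilon$. Writing $\int G(\xi_n)dx=\int G_{k_0}(\xi_n)dx+\int(G-G_{k_0})(\xi_n)dx$, taking $\limsup$ in $n$, and applying $(\ast)$ to the first summand yields $\limsup_n\int G(\xi_n)dx\leq\int\langle G_{k_0};\mu_x\rangle dx+\epsilon\leq\int\langle G;\mu_x\rangle dx+\epsilon$. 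Combining this with $\liminf_n\int G(\xi_n)dx\geq\int\langle G;\mu_x\rangle dx$ from Theorem \ref{S-T-YM} and letting $\epsilon\to 0$ gives the desired equality, with finiteness of the limit following from $\sup_n\int G(\xi_n)dx<\infty$.

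For the ``$\Rightarrow$'' direction, subtracting $(\ast)$ from the hypothesis yields, for each $k$,
$$
\lim_{n\to\infty}\int_\Omega (G(\xi_n)-k)^+dx=\alpha(k):=\int_\Omega\langle G-G_k;\mu_x\rangle dx,
$$
and $\alpha(k)\to 0$ as $k\to\infty$ by monotone convergence (using $\int\langle G;\mu_x\rangle dx<\infty$). The remaining task is to upgrade this pointwise-in-$n$ limit to the uniform statement $\lim_{k\to\infty}\sup_n\int(G(\xi_n)-k)^+dx=0$, which is equivalent to equi-integrability on a bounded domain. Given $\epsilon>0$, I would pick $k_0$ with $\alpha(k_0)<\epsilon/3$, then $n_0$ so that $\int(G-G_{k_0})(\xi_n)dx<\alpha(k_0)+\epsilon/3<\epsilon$ for $n\geq n_0$, and handle the finitely many indices $n<n_0$ individually using $G(\xi_n)\in L^1(\Omega)$ (so $\int(G-G_k)(\xi_n)dx\to 0$ as $k\to\infty$ by dominated convergence); taking $k$ equal to the maximum of all these thresholds then gives the uniform bound.

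The principal obstacle is precisely this last step in the ``$\Rightarrow$'' direction: passing from the $n$-pointwise limit $\int(G-G_k)(\xi_n)dx\to\alpha(k)$ (for each fixed $k$) to tail-uniform control as $k\to\infty$. The workaround just described---splitting into a tail-in-$n$ piece controlled by $\alpha(k)$ and a finite-$n$ residual handled by individual integrability---is the technical crux of the argument.
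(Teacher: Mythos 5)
Your proposal is correct. Note first that the paper does not actually prove Theorem \ref{C-T-YM}: it is quoted with a pointer to the literature (Balder, Sychev), so there is no in-paper argument to compare against; your write-up is a legitimate self-contained derivation. The two pillars both hold up: (i) the identity $(\ast)$ for the truncations $G_k=G\wedge k$ is obtained cleanly by applying Theorem \ref{S-T-YM} to the two nonnegative functions $G_k$ and $k-G_k$, both Borel and continuous on the closed set $\DD$, which sidesteps the issue that $G_k$ need not belong to $C_0(\MM)$ and so cannot be tested directly against the definition of a Young measure; (ii) the uniformization step in the ``$\Rightarrow$'' direction (tail in $n$ controlled by $\alpha(k_0)$, finitely many initial indices handled one at a time by monotone convergence, then monotonicity of $k\mapsto (G-G_k)^+$ to take the maximum of the thresholds) is exactly the right way to convert the $n$-pointwise limits into $\lim_{k\to\infty}\sup_n\int_\Omega (G(\xi_n)-k)^+dx=0$, which on the bounded domain $\Omega$ is indeed equivalent to equi-integrability via $\int_{\{G(\xi_n)>2k\}}G(\xi_n)dx\le 2\int_\Omega(G(\xi_n)-k)^+dx$. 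The only point worth flagging is the implicit assumption $G(\xi_n)\in L^1(\Omega)$ for \emph{every} $n$ in the finite-residual step: the hypothesis $\lim_n\int_\Omega G(\xi_n)dx<\infty$ only forces finiteness for $n$ large, and a sequence in $[0,\infty]$ can converge to a finite limit while finitely many terms equal $+\infty$, in which case equi-integrability genuinely fails. This is a (harmless) imprecision already present in the statement of the theorem rather than a gap in your argument, but it deserves one sentence of acknowledgment.
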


\subsection{Some facts on ru-usc integrands}

The concept of ru-usc integrand was introduced by Anza Hafsa in \cite{oah10} to deal with relaxation of variational integrals in $W^{1,\infty}$ with constraints on the gradient in the framework of the multidimensional calculus of variations (see also \cite{wagner09}). In fact, in the scalar case, such constrained relaxation and homogenization problems was previously intensively studied by Carbone and De Arcangelis (see \cite{carbone-dearcangelis02}), but their techniques could not be generalized to the vector case. Recently, developing  the concept of ru-usc integrand, we succeeded to deal with constraints on the gradient in the context of homogenization of multiple integrals in $W^{1,p}$, with $p\in]1,\infty]$, in the vector case (see \cite{oah-jpm10b,oah-jpm10a}). Here we only recall the ingredients that we need for proving Theorem \ref{Main-Theorem-Bis}. For more details on the concept of ru-usc integrand we refer to \cite[\S 3.1]{oah-jpm10b}.

Let $G:\MM\to[0,\infty]$ be a Borel measurable function whose effective domain is denoted by $\GG$. For each $c>0$ we define $\Delta_G^c:[0,1]\to]-\infty,\infty]$ by
$$
\Delta_G^c(t):=\sup_{\xi\in \GG}{G(t\xi)-G(\xi)\over c+G(\xi)}.
$$

\begin{definition}\label{ru-usc-Def}
We say that $G$ is radially uniformly upper semicontinuous (ru-usc) if there exists $c>0$ such that
$
\limsup_{t\to 1}\Delta^c_G(t)\leq 0.
$
\end{definition}

Define $\widehat{G}:\MM\to[0,\infty]$ by 
$$
\widehat{G}(\xi):=
\liminf_{t\to 1}G(t\xi).
$$

The interest of Definition \ref{ru-usc-Def} comes from the following theorem (for a proof see \cite[\S 3.1]{oah-jpm10b}). 

\begin{theorem}\label{Extension-Result-for-ru-usc-Functions}
If $G$ is ru-usc and 
\begin{equation}\label{Homothecie-Assumption-Bis}
t\overline{\GG}\subset{\rm int}(\GG)\hbox{ for all }t\in]0,1[,
\end{equation}
where $\overline{\GG}$ (resp. ${\rm int}(\GG)$) denotes the closure (resp. the interior) of $\GG$, and $G$ is lower semicontinuous (lsc) on ${\rm int}(\GG)$ then{\rm:}
\begin{itemize}
\item[(i)] $
\widehat{G}(\xi)=\left\{
\begin{array}{ll}
G(\xi)&\hbox{if }\xi\in{\rm int}(\GG)\\
\lim\limits_{t\to 1}G(t\xi)&\hbox{if }\xi\in\partial\GG\\
\infty&\hbox{otherwise{\rm;}}
\end{array}
\right.
$
\item[(ii)] $\widehat{G}$ is ru-usc{\rm;}
\item[(iii)] $\widehat{G}$ is the lsc envelope of $G$.
\end{itemize}
\end{theorem}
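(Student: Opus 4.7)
The plan is to prove (i)--(iii) in order, using the defining ru-usc inequality
$$G(t\xi) \leq G(\xi) + \Delta^c_G(t)\bigl(c + G(\xi)\bigr), \qquad \xi \in \GG,\ t\in]0,1],$$
together with the lsc hypothesis on ${\rm int}(\GG)$ and the absorption property $t\overline{\GG}\subset{\rm int}(\GG)$. For (i), three cases. If $\xi\in{\rm int}(\GG)$, the lsc of $G$ yields $G(\xi)\leq\liminf_{t\to 1}G(t\xi)$, while the ru-usc inequality gives $\limsup_{t\to 1}G(t\xi)\leq G(\xi)$ (since $\limsup_{t\to 1}\Delta^c_G(t)\leq 0$), so $\widehat G(\xi)=G(\xi)$. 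If $\xi\notin\overline{\GG}$, then $t\xi\notin\overline{\GG}$ for $t$ close to $1$, hence $G(t\xi)=\infty$ and $\widehat G(\xi)=\infty$.

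The delicate case is $\xi\in\partial\GG$, where I must promote the $\liminf$ defining $\widehat G(\xi)$ to a genuine limit. For $0<t_1<t_2<1$ the absorption property places $t_2\xi\in{\rm int}(\GG)\subset\GG$, and the identity $t_1\xi=(t_1/t_2)(t_2\xi)$ combined with ru-usc gives
$$G(t_1\xi)\leq G(t_2\xi)\bigl(1+\Delta^c_G(t_1/t_2)\bigr) + c\,\Delta^c_G(t_1/t_2).$$
Choosing $t_1^n\to 1^-$ realizing the $\limsup$ and, along a subsequence of a $\liminf$-realizing sequence, $t_2^n\to 1^-$ with $t_1^n<t_2^n$ so that $t_1^n/t_2^n\to 1$, passage to the limit together with $\limsup\Delta^c_G\leq 0$ yields $\limsup_{t\to 1}G(t\xi)\leq\liminf_{t\to 1}G(t\xi)$ whenever the latter is finite; if it is $+\infty$ the conclusion is immediate.

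For (ii) the effective domain of $\widehat G$ lies in $\overline{\GG}$ by (i). Given $\xi\in\overline{\GG}$ with $\widehat G(\xi)<\infty$ and $t\in]0,1[$, pick $s_n\to 1^-$ with $G(s_n\xi)\to\widehat G(\xi)$; each $s_n\xi\in{\rm int}(\GG)$, and ru-usc of $G$ at $s_n\xi$ yields
$$G(ts_n\xi)\leq G(s_n\xi)\bigl(1+\Delta^c_G(t)\bigr)+c\,\Delta^c_G(t).$$
Since $ts_n\xi\to t\xi\in t\overline{\GG}\subset{\rm int}(\GG)$ and $G$ is lsc there, taking $\liminf_n$ on the left and the limit on the right gives $G(t\xi)\leq\widehat G(\xi)(1+\Delta^c_G(t))+c\,\Delta^c_G(t)$. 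By (i), $G(t\xi)=\widehat G(t\xi)$, so supping over $\xi\in\overline{\GG}$ yields $\Delta^c_{\widehat G}(t)\leq\Delta^c_G(t)$ and the ru-usc property transfers to $\widehat G$.

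For (iii) I would compare $\widehat G$ with the standard lsc envelope $G^{\rm lsc}(\xi):=\liminf_{\eta\to\xi}G(\eta)$. Testing with $\eta_n:=t_n\xi$, $t_n\to 1^-$, gives $G^{\rm lsc}(\xi)\leq\widehat G(\xi)$. For the converse, pick $\eta_n\to\xi$ in $\GG$ with $G(\eta_n)\to G^{\rm lsc}(\xi)$ (trivial if $G^{\rm lsc}(\xi)=\infty$); for fixed $t\in]0,1[$, ru-usc and the lsc of $G$ at $t\xi\in{\rm int}(\GG)$ give
$$G(t\xi)\leq G^{\rm lsc}(\xi)\bigl(1+\Delta^c_G(t)\bigr)+c\,\Delta^c_G(t),$$
and letting $t\to 1^-$ together with (i) yields $\widehat G(\xi)\leq G^{\rm lsc}(\xi)$. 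The step I expect to be the main obstacle is the existence-of-limit argument in (i) on $\partial\GG$: it uses ru-usc through the ratio $t_1/t_2$, and only the absorption property (R$_2$) ensures that $t_2\xi$ lies in $\GG$ so that $\Delta^c_G(t_1/t_2)$ can be applied legitimately; without (R$_2$) the comparison inequality breaks exactly at boundary points and the rest of the argument collapses.
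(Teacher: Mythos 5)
Your proof is correct: the ratio trick $t_1\xi=(t_1/t_2)(t_2\xi)$ with $t_2\xi\in t_2\overline{\GG}\subset{\rm int}(\GG)\subset\GG$ legitimately upgrades the $\liminf$ to a limit on $\partial\GG$, the bound $\Delta^c_{\widehat G}(t)\leq\Delta^c_G(t)$ gives (ii), and the identification of $\widehat G$ with $\liminf_{\eta\to\xi}G(\eta)$ gives (iii). The paper itself does not prove this theorem but defers to \cite[\S 3.1]{oah-jpm10b}, and your argument is essentially the one used there, so there is nothing further to compare.
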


Let $q\in]1,\infty]$ and let $\Z_qG:\MM\to[0,\infty]$ be given by
\begin{equation}\label{ZqG-Def}
\Z_q G(\xi):=\inf\left\{\int_Y G(\nabla\phi(y))dy:\phi\in l_\xi+W^{1,q}_0(Y;\RR^m)\right\}
\end{equation}
with $Y:=]-{1\over 2},{1\over 2}[^N$ and $l_\xi(y):=\xi y$. The following proposition shows that ru-usc integrands have a nice behavior with respect to the $W^{1,q}$-quasiconvexification (for a proof see \cite[\S 3.1]{oah-jpm10b}).

\begin{proposition}\label{Coro-Prop-ru-usc1}
If $G$ is ru-usc then $\Z_q G$ is ru-usc.
\end{proposition}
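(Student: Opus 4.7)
The natural strategy is to transfer the ru-usc bound from $G$ to $\Z_q G$ by homothety of the admissible test fields. Let $c>0$ be the constant witnessing that $G$ is ru-usc, i.e., $\limsup_{t\to 1}\Delta_G^c(t)\leq 0$. I claim that the \emph{same} constant $c$ works for $\Z_qG$, and in fact
\[
\Delta_{\Z_q G}^c(t)\leq \Delta_G^c(t)\quad\text{for every }t\in\,]0,1[\,,
\]
from which the conclusion follows at once by taking $\limsup_{t\to 1}$.

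To prove the key inequality, fix $t\in\,]0,1[$ and let $\xi\in\Z_q\GG$, so $\Z_q G(\xi)<\infty$. Given $\eta>0$, pick $\phi\in l_\xi+W^{1,q}_0(Y;\RR^m)$ with $\int_Y G(\nabla\phi(y))\,dy\leq \Z_q G(\xi)+\eta$. The first observation is that $t\phi\in l_{t\xi}+W^{1,q}_0(Y;\RR^m)$, because $t\phi-l_{t\xi}=t(\phi-l_\xi)\in W^{1,q}_0(Y;\RR^m)$; hence $t\phi$ is admissible in the infimum problem defining $\Z_qG(t\xi)$. Because $\int_Y G(\nabla\phi)\,dy<\infty$, we have $\nabla\phi(y)\in\GG$ a.e. on $Y$, so the definition of $\Delta_G^c(t)$ applied pointwise yields
\[
G(t\nabla\phi(y))\leq G(\nabla\phi(y))+\Delta_G^c(t)\bigl(c+G(\nabla\phi(y))\bigr)\quad\text{for a.a. }y\in Y.
\]
Integrating on $Y$ (which has unit measure) gives
\[
\Z_q G(t\xi)\leq \int_Y G(t\nabla\phi)\,dy\leq \bigl(1+\Delta_G^c(t)\bigr)\int_Y G(\nabla\phi)\,dy+c\,\Delta_G^c(t).
\]

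Since $G\geq 0$, each quotient in the supremum defining $\Delta_G^c(t)$ is $\geq -1$, so $1+\Delta_G^c(t)\geq 0$ and we may bound $\int_Y G(\nabla\phi)\,dy\leq \Z_q G(\xi)+\eta$ on the right-hand side. Rearranging,
\[
\frac{\Z_q G(t\xi)-\Z_q G(\xi)}{c+\Z_q G(\xi)}\leq \Delta_G^c(t)+\frac{(1+\Delta_G^c(t))\eta}{c+\Z_q G(\xi)}.
\]
Letting $\eta\to 0$ and then taking the supremum over $\xi\in\Z_q\GG$ produces $\Delta_{\Z_q G}^c(t)\leq \Delta_G^c(t)$, and passing to $\limsup_{t\to 1}$ yields $\limsup_{t\to 1}\Delta_{\Z_qG}^c(t)\leq 0$, proving that $\Z_q G$ is ru-usc.

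There is no real obstacle here beyond being careful with the sign of $\Delta_G^c(t)$. The only subtle step is checking that $1+\Delta_G^c(t)\geq 0$ (so that enlarging the integrand via $\Z_qG(\xi)+\eta$ indeed enlarges the right-hand side); this follows purely from $G\geq 0$. Everything else is a one-line homothety argument plus pointwise insertion of the ru-usc inequality into the integral representation of $\Z_qG$.
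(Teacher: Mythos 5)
Your proof is correct and is exactly the standard argument (the one in the reference \cite[\S 3.1]{oah-jpm10b} that the paper cites for this proposition): take a near-minimizer $\phi$ for $\Z_qG(\xi)$, use $t\phi$ as a competitor for $\Z_qG(t\xi)$, and insert the ru-usc inequality pointwise under the integral, with the observation $1+\Delta_G^c(t)\geq -1+1=0$ justifying the passage to $\Z_qG(\xi)+\eta$. The only cases you leave implicit ($\Delta_G^c(t)=+\infty$, or $\Z_q\GG=\emptyset$) make the claimed inequality trivially true, so there is no gap.
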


\subsection{Two properties of the $W^{1,q}$-quasiconvexification formula} Here, we give two (classical) properties of the $W^{1,q}$-quasiconvexification formula, i.e., $\Z_qG:\MM\to[0,\infty]$ defined by \eqref{ZqG-Def}. Denote the effective domain of $\Z_qG$ by $\Z_q\GG$. Lemma \ref{Fonseca-Lemma} below is due to Fonseca (see \cite{fonseca88}). 
\begin{lemma}\label{Fonseca-Lemma}
$\Z_qG$ is continuous on ${\rm int}(\Z_q\GG)$.
\end{lemma}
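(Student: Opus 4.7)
The plan is to show first that $\Z_qG$ is rank-one convex on $\MM$ (and hence separately convex), and then to invoke the classical fact that a separately convex, finite-valued function on an open subset of $\MM$ is locally Lipschitz. Since $\Z_qG$ is finite on ${\rm int}(\Z_q\GG)$ by definition of the effective domain, the continuity claim would follow.

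For rank-one convexity, fix $\xi_1,\xi_2\in\Z_q\GG$ with $\xi_1-\xi_2=a\otimes b$ for some $a\in\RR^m$ and $b\in\RR^N$, and $\lambda\in\,]0,1[$; set $\xi_0:=\lambda\xi_1+(1-\lambda)\xi_2$. Given $\eps>0$, I would pick near-optimal test maps $\phi_i\in l_{\xi_i}+W^{1,q}_0(Y;\RR^m)$ with $\int_YG(\nabla\phi_i)\,dy\leq\Z_qG(\xi_i)+\eps$ and extend each $\phi_i-l_{\xi_i}$ by $Y$-periodicity. On the subcube $Y_\eta:=(1-\eta)Y$ I would place a fine laminate: partition $Y_\eta$ into thin slabs perpendicular to $b$ with widths in ratio $\lambda:(1-\lambda)$, take the continuous piecewise-affine map whose gradient equals $\xi_1$ on the $\lambda$-slabs and $\xi_2$ on the $(1-\lambda)$-slabs (continuous thanks to the rank-one condition $\xi_1-\xi_2=a\otimes b$), and add on each slab a rescaled periodic copy of $\phi_i-l_{\xi_i}$ at a much smaller scale than the slab width. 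Outside $Y_\eta$, a standard cutoff joins $l_{\xi_0}$ on $\partial Y$ so that the resulting map lies in $l_{\xi_0}+W^{1,q}_0(Y;\RR^m)$. Sending the slab width to zero at fixed $\eta$, then $\eta\to 0$ and $\eps\to 0$, one obtains
$$
\Z_qG(\xi_0)\leq\lambda\Z_qG(\xi_1)+(1-\lambda)\Z_qG(\xi_2),
$$
which is rank-one convexity. Taking $\xi_1-\xi_2$ to be a single-entry matrix (which has rank one) then yields separate convexity of $\Z_qG$.

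To pass from separate convexity to continuity, I would use the standard result that any separately convex, real-valued function on an open set $U\subset\MM$ is locally Lipschitz on $U$: in one variable a finite convex function is locally Lipschitz on the interior of its effective domain, and iterating coordinate by coordinate on a small cube inside $U$ gives first local boundedness and then a full matrix Lipschitz estimate. Applied to $\Z_qG$ on $U={\rm int}(\Z_q\GG)$, this yields the claimed continuity. The main obstacle is the lamination construction above: because $G$ is only Borel measurable and may take the value $+\infty$ outside $\GG$, the cutoff to boundary condition $l_{\xi_0}$ must be performed so that its gradients stay in $\GG$ and generate an $o(1)$ (not $+\infty$) contribution to the energy. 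I would handle this by carrying out the cutoff in a transition region where the laminated gradient is already close to $\xi_0\in\Z_q\GG$, so that only near-$\xi_0$ gradients, together with the small-measure supports of the rescaled perturbations of $\phi_i-l_{\xi_i}$, enter the cutoff layer; this is the technical heart of Fonseca's argument in \cite{fonseca88}.
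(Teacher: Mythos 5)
Your overall strategy (rank-one/separate convexity of the envelope, then the classical fact that a finite separately convex function on an open set is locally Lipschitz) is indeed the strategy behind Fonseca's lemma, which the paper itself does not prove but cites to \cite{fonseca88}. However, there is a genuine gap at the key step: the claim that $\Z_qG$ is rank-one convex on all of $\MM$ is \emph{false} for extended-valued $G$. A Ball--Murat type example: let $G=0$ on two rank-one connected matrices $A,B$ and $G=+\infty$ elsewhere; then $\Z_qG(A)=\Z_qG(B)=0$, while rigidity for the two-gradient problem forces $\Z_qG(\lambda A+(1-\lambda)B)=+\infty$. The obstruction is exactly the one you flag but do not resolve: in the transition layer where the laminate is glued to $l_{\xi_0}$, the gradients are \emph{not} close to $\xi_0$ (they are convex combinations of $\xi_1,\xi_2$, the oscillating corrections, and terms of the form $\nabla\phi\otimes(u-l_{\xi_0})$ coming from the cutoff), and even if they were, closeness to $\xi_0$ gives no control on $G$: $G$ is merely Borel, may equal $+\infty$ arbitrarily close to $\xi_0$, and $\xi_0$ need not even belong to $\GG$ for $\Z_qG(\xi_0)$ to be finite. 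Without a growth condition, ``small measure of the cutoff layer'' does not convert into ``small energy contribution''.

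The argument must therefore be localized to ${\rm int}(\Z_q\GG)$ from the outset and run at the level of $\Z_qG$ rather than of $G$. Concretely: (i) using Lemma \ref{Localization-Lemma-ZG} together with a Vitali patching (as in the proof of \eqref{FI-MTh-2}) one first shows $\Z_qG(\xi)\le\int_Y\Z_qG(\nabla\varphi(y))\,dy$ for every piecewise affine $\varphi\in l_\xi+W^{1,\infty}_0(Y;\RR^m)$; (ii) for a rank-one segment whose closed $\delta$-neighborhood lies in ${\rm int}(\Z_q\GG)$, one uses the refined laminate approximation in which \emph{all} gradients stay within $\delta$ of the segment and all but a set of measure $\delta$ lie within $\delta$ of the endpoints --- for the sawtooth construction the only extra term is $k^{-1}\sigma(k\,x\cdot b)\,a\otimes\nabla\theta$, which tends to $0$ uniformly as the oscillation frequency $k\to\infty$ at fixed cutoff $\theta$ --- so that only values of $\Z_qG$ on a compact subset of the interior are ever evaluated; (iii) one still needs local upper boundedness of $\Z_qG$ on such compact subsets to discard the exceptional set, and this is not automatic here (unlike the finite continuous case, $\Z_qG$ is not an infimum of continuous functions of $\xi$), so it must be proved as a separate step. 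With these repairs your final step applies verbatim; as written, the lamination step does not go through.
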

The proof of the following lemma can be found in \cite[\S 3.4]{oah-jpm10b} (see also \cite{oah-jpm08a,oah-jpm07}).
\begin{lemma}\label{Localization-Lemma-ZG}
Given $\xi\in\MM$ and a bounded open set $U\subset\RR^N$ there exists $\{\phi_n\}_n\subset W^{1,q}_0(U;\RR^m)$ such that
$$
\left\{
\begin{array}{l}
\displaystyle\lim_{n\to\infty}\|\phi_n\|_{L^q(U;\RR^m)}=0\\
\displaystyle\lim_{n\to\infty}\mint_UG(\xi+\nabla\phi_n(x))dx=\Z_qG(\xi).
\end{array}
\right.
$$
\end{lemma}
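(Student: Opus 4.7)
The plan is to transfer the cube-based infimum \eqref{ZqG-Def} defining $\Z_qG(\xi)$ onto an arbitrary bounded open set $U$ via a Vitali tiling of $U$ by small homothetic copies of $Y$. The crucial observation is the scaling invariance of the energy $G(\xi+\nabla\cdot)$: if $\tilde\phi\in W^{1,q}_0(Y;\RR^m)$, $x_0\in\RR^N$ and $r>0$, then $\psi(x):=r\tilde\phi((x-x_0)/r)$ lies in $W^{1,q}_0(x_0+rY;\RR^m)$ with $\nabla\psi(x)=\nabla\tilde\phi((x-x_0)/r)$, so the distribution of the gradient is preserved while $\|\psi\|_{L^q}$ is proportional to $r$.

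Fix $\eta>0$ and suppose first $\Z_qG(\xi)<\infty$. By \eqref{ZqG-Def} pick $\tilde\phi_\eta\in W^{1,q}_0(Y;\RR^m)$ with $\int_Y G(\xi+\nabla\tilde\phi_\eta(y))\,dy\leq\Z_qG(\xi)+\eta$. For $\delta>0$, the collection of closed cubes contained in $U$, homothetic to $Y$ and of diameter less than $\delta$, is a Vitali cover of $U$, so Vitali's theorem yields a disjoint countable subfamily $\{Y_i=x_i+r_iY\}_{i\in\NN}$ with $r_i<\delta$ and $|U\setminus\bigcup_iY_i|=0$. Define
\[
\phi_{\eta,\delta}(x):=r_i\,\tilde\phi_\eta\bigl((x-x_i)/r_i\bigr)\text{ on }Y_i,\qquad \phi_{\eta,\delta}(x):=0\text{ on }U\setminus\bigcup_iY_i.
\]
The vanishing traces on each $\partial Y_i$ together with the bound $\sum_i r_i^N\|\nabla\tilde\phi_\eta\|_{L^q(Y)}^q=|U|\,\|\nabla\tilde\phi_\eta\|_{L^q(Y)}^q<\infty$ place $\phi_{\eta,\delta}$ in $W^{1,q}_0(U;\RR^m)$. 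By the change of variables $y=(x-x_i)/r_i$ and $\sum_i r_i^N=|U|$,
\[
\int_U G(\xi+\nabla\phi_{\eta,\delta}(x))\,dx=\sum_i r_i^N\int_Y G(\xi+\nabla\tilde\phi_\eta(y))\,dy\leq|U|\bigl(\Z_qG(\xi)+\eta\bigr),
\]
while $\|\phi_{\eta,\delta}\|_{L^q(U;\RR^m)}^q\leq\delta^q|U|\,\|\tilde\phi_\eta\|_{L^q(Y;\RR^m)}^q$, which tends to $0$ as $\delta\to 0$ at fixed $\eta$. A diagonal choice $(\eta_n,\delta_n)\to(0,0)$ then produces $\phi_n\in W^{1,q}_0(U;\RR^m)$ with $\|\phi_n\|_{L^q(U;\RR^m)}\to 0$ and $\limsup_n\mint_U G(\xi+\nabla\phi_n(x))\,dx\leq\Z_qG(\xi)$.

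For the matching lower bound I run the same Vitali-and-scale construction inside $Y$: given any $\phi\in W^{1,q}_0(U;\RR^m)$, Vitali-tile $Y$ a.e.\ by disjoint rescaled translates $x_j+r_jU\subset Y$, paste the rescaled maps $r_j\phi((\cdot-x_j)/r_j)$ and extend by $0$; this yields a function in $W^{1,q}_0(Y;\RR^m)$ with gradient energy $|Y|\mint_U G(\xi+\nabla\phi)$, so \eqref{ZqG-Def} gives $\Z_qG(\xi)\leq\mint_U G(\xi+\nabla\phi)$. Applying this to each $\phi_n$ forces $\liminf_n\mint_U G(\xi+\nabla\phi_n)\geq\Z_qG(\xi)$, and combined with the preceding step the limit exists and equals $\Z_qG(\xi)$. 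The case $\Z_qG(\xi)=\infty$ is trivial: testing $\phi=0$ in \eqref{ZqG-Def} gives $\Z_qG(\xi)\leq G(\xi)$, hence $G(\xi)=\infty$ and the constant sequence $\phi_n\equiv 0$ already satisfies $\mint_U G(\xi)\,dx=\infty=\Z_qG(\xi)$.

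The only real technical obstacle is the justification that the countable gluing $\phi_{\eta,\delta}$ genuinely belongs to $W^{1,q}_0(U;\RR^m)$ and not merely to $W^{1,q}$ on each piece. The distributional gradient on $U$ coincides with the cube-by-cube gradient because the zero traces on each $\partial Y_i$ eliminate all boundary jump contributions, the $L^q$-estimate above gives integrability of that gradient, and density by $C^\infty_c(U)$ functions follows by truncating to a finite subfamily of cubes and mollifying inside each. No other step requires more than routine measure-theoretic bookkeeping.
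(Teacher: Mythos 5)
Your proposal is correct and is essentially the argument that the paper delegates to \cite[\S 3.4]{oah-jpm10b}: the Vitali--cover--and--rescale construction, applied in both directions, is the standard way to show that the $W^{1,q}$-quasiconvexification formula is independent of the domain and is attained along test functions whose $L^q$-norms shrink with the mesh of the cover. Only cosmetic care is needed for $q=\infty$, where the $q$-th-power sums should be replaced by suprema (the estimates $\|\phi_{\eta,\delta}\|_{L^\infty}\leq\delta\|\tilde\phi_\eta\|_{L^\infty(Y)}$ and the uniform Lipschitz bound on the glued function take their place).
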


\subsection{Approximation of integrals with convex-growth} Here we assume that $L$ has $H$-convex-growth, i.e., $\beta H(\cdot)\leq G(\cdot)\leq\alpha(1+H(\cdot))$ for some $\alpha,\beta>0$ and some convex function $H:\MM\to[0,\infty]$. Then, it is easy to see that the effective domain of $L$ is equal to the effective domain of $H$, which is convex, denoted by $\HH$ and assumed to contain the zero matrix in its interior, i.e., $0\in{\rm int}(\HH)$. We also suppose that the bounded open set $\Omega\subset\RR^N$ is strongly star-shaped, i.e., there exists $x_0\in\Omega$ such that $\overline{-x_0+\Omega}\subset t(-x_0+\Omega)$ for all $t>1$. The proof of Lemma \ref{ApproX-Lemma-1} below can be found in \cite[\S 3.3]{oah-jpm10b}.

\begin{lemma}\label{ApproX-Lemma-1}
Let $p\in]1,\infty[$ and let $u\in W^{1,p}(\Omega;\RR^m)$ be such that
$$
\left\{
\begin{array}{l}
\displaystyle\int_\Omega G(\nabla u(x))dx<\infty\\
\nabla u(x)\in {\rm int}(\HH)\hbox{ for a.a. }x\in \Omega.
\end{array}
\right.
$$
If $G$ is continuous on {\rm int$(\HH)$} then there exists $\{u_n\}_n\subset\Aff(\Omega;\RR^m)$ such that
$$
\left\{
\begin{array}{l}
u_n\to u\hbox{ in }W^{1,p}(\Omega;\RR^m)\\
\displaystyle\lim_{n\to\infty}\int_\Omega\Z_p G(\nabla u_n(x))=\int_\Omega\Z_p G(\nabla u(x))dx.
\end{array}
\right.
$$
\end{lemma}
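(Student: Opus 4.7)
The plan is a classical two-stage approximation: first, regularize $u$ by a dilation followed by mollification in order to obtain smooth approximants whose gradients lie in ${\rm int}(\HH)$ with a uniform margin; second, approximate each smooth function by piecewise affine interpolants on simplicial triangulations; finally, extract a diagonal sequence. The convexity of $H$, through Jensen's inequality, supplies the $L^1$ estimates needed to pass to the limit under the integral.

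After translating so that $x_0 = 0$, strong star-shapedness yields $\overline{\Omega} \subset t\Omega$ for every $t > 1$. Set $u_t(y) := u(y/t)$ on $t\Omega$, so that $u_t$ is defined on a neighbourhood of $\overline\Omega$ and $\nabla u_t(y) = \frac{1}{t}\nabla u(y/t) \in \frac{1}{t}\HH$. Since $0 \in {\rm int}(\HH)$ admits a ball $B_\delta \subset \HH$, convexity of $\HH$ forces $\frac{1}{t}\HH \subset {\rm int}(\HH)$ with uniform ``thickness'' $\delta(1-1/t)$. Convolving with a standard mollifier, $u_{t,\varepsilon} := u_t * \rho_\varepsilon$ is smooth on $\Omega$ for $\varepsilon < {\rm dist}(\overline\Omega,\partial(t\Omega))$, and convexity of $\frac{1}{t}\HH$ ensures $\nabla u_{t,\varepsilon}(x) \in \frac{1}{t}\HH \subset {\rm int}(\HH)$. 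Continuity of translation in $L^p$ gives $u_{t,\varepsilon} \to u$ in $W^{1,p}$ as $\varepsilon \to 0$ and $t \to 1$. Jensen's inequality applied to the convex function $H$ yields $H(\nabla u_{t,\varepsilon}(x)) \le (\rho_\varepsilon * H(\nabla u_t))(x)$, and a direct change of variable produces an estimate of the form $\int_\Omega H(\nabla u_t) \le t^{N-1}\int_\Omega H(\nabla u) + (1-1/t)H(0)|\Omega|$, from which equi-integrability of $\{H(\nabla u_{t,\varepsilon})\}$ (as $t\to 1$, $\varepsilon\to 0$) follows.

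Because $\beta H \le \Z_pG \le \alpha(1+H)$ (the lower bound extends to all of $\MM$ by Jensen applied inside the definition of $\Z_p G$), the effective domains coincide: $\Z_p\HH = \HH$. By Lemma \ref{Fonseca-Lemma} the function $\Z_p G$ is continuous on ${\rm int}(\HH)$. Pointwise a.e.\ convergence (along a subsequence) of $\nabla u_{t,\varepsilon} \to \nabla u$, the bound $\Z_p G(\nabla u_{t,\varepsilon}) \le \alpha(1 + H(\nabla u_{t,\varepsilon}))$, and Vitali's convergence theorem together yield $\int_\Omega \Z_p G(\nabla u_{t,\varepsilon}) \to \int_\Omega \Z_p G(\nabla u)$. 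For each smooth $u_{t,\varepsilon}$, take simplicial triangulations of $\overline\Omega$ with mesh size tending to $0$ and let $w_n^{t,\varepsilon} \in \Aff(\Omega; \RR^m)$ be the continuous piecewise affine interpolants at the vertices; standard finite-element estimates give $\nabla w_n^{t,\varepsilon} \to \nabla u_{t,\varepsilon}$ uniformly on $\Omega$. Since $\nabla u_{t,\varepsilon}(\overline\Omega)$ is a compact subset of ${\rm int}(\HH)$, for $n$ large the $\nabla w_n^{t,\varepsilon}$ remain in a fixed compact subset of ${\rm int}(\HH)$; continuity of $\Z_p G$ there yields $\Z_p G(\nabla w_n^{t,\varepsilon}) \to \Z_p G(\nabla u_{t,\varepsilon})$ uniformly, and hence of the integrals. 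A standard diagonal extraction over $t \to 1$, $\varepsilon \to 0$, $n \to \infty$ then produces the required sequence.

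The main obstacle will be the equi-integrability step: Jensen's inequality alone only dominates $H(\nabla u_{t,\varepsilon})$ by $\rho_\varepsilon * H(\nabla u_t)$, and controlling $\int_\Omega H(\nabla u_t)$ uniformly in $t$ by $\int_\Omega H(\nabla u)$ requires the change of variables together with $H(0)<\infty$ (which holds because $0\in{\rm int}(\HH)$) and the convexity of $H$. The strong star-shapedness of $\Omega$ is what makes the outward dilation well-posed, while the assumption $0\in{\rm int}(\HH)$ is crucial to keep $\nabla u_{t,\varepsilon}$ safely inside ${\rm int}(\HH)$, the only region where continuity (and therefore stable approximation) of $\Z_p G$ is guaranteed.
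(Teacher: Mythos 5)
Your argument is correct, and it reproduces the standard proof of this lemma: the paper itself does not prove it but refers to \cite[\S 3.3]{oah-jpm10b}, where exactly this scheme is used — an outward dilation (made possible by strong star-shapedness) to push $\nabla u$ into $\tfrac1t\HH$, which sits at uniform distance from $\partial\HH$ because $0\in{\rm int}(\HH)$ and $\HH$ is convex, followed by mollification controlled via Jensen's inequality for $H$, piecewise affine interpolation on fine triangulations, and a diagonal extraction, with the growth condition $\beta H\leq\Z_pG\leq\alpha(1+H)$ and Fonseca's continuity lemma handling the passage to the limit in the $\Z_pG$-energy. No gaps worth flagging beyond routine technicalities (shape-regular meshes, equi-integrability via $L^1$-convergence of the dominating mollified family), all of which you indicate correctly.
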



\section{Proof of Theorem \ref{Main-Theorem}}

In this section we prove Theorem \ref{Main-Theorem}

\subsection{Proof of Theorem \ref{Main-Theorem}(a)} We are going to prove the following two inequalities: 
\begin{eqnarray}
&&\overline{I}(u)\geq\int_\Omega\Z_qL(\nabla u(x))dx\hbox{ for all }u\in W^{1,p}(\Omega;\RR^m);\label{FI-MTh-1}\\
&&\overline{I}(u)\leq\int_\Omega\Z_qL(\nabla u(x))dx\hbox{ for all }u\in \Aff(\Omega;\RR^m).\label{FI-MTh-2}
\end{eqnarray}

\begin{proof}[\bf Proof of (\ref{FI-MTh-1})]
Consider $u\in W^{1,p}(\Omega;\RR^m)$  and $\{u_n\}_n\subset W^{1,p}(\Omega;\RR^m)$ such that 
\begin{equation}\label{P-MT-Eq1}
\|u_n-u\|_{L^p(\Omega;\RR^m)}\to0,
\end{equation}
and prove that
\begin{equation}\label{P-MT-Eq0}
\liminf_{n\to\infty}I(u_n)\geq \int_\Omega\Z_qL(\nabla u(x))dx.
\end{equation}
\subsection*{Step 1: localization} Without loss of generality we can assume that
\begin{equation}\label{P-MT-Eq2}
\infty>\liminf_{n\to\infty}I(u_n)=\lim_{n\to\infty}I(u_n) \hbox{ and so }\sup_n\int_\Omega L(\nabla u_n(x))dx<\infty.
\end{equation}
Hence $\nabla u_n(x)\in\LL$ for all $n\geq 1$ and a.a. $x\in\Omega$,where $\LL$ denotes the effective domain of $L$. Since $L$ is $p$-coercive, from \eqref{P-MT-Eq2} we see that $\sup_n\int_\Omega|\nabla u_n(x)|^pdx<\infty$ and so, by Theorem \ref{ExiStenCE-Young-measures}, there exists a family $(\mu_x)_{x\in\Omega}$ of probability measures on $\MM$ such that (up to a subsequence)
\begin{equation}\label{P-MT-Eq3}
\{\nabla u_n\}_n\hbox{ generates }(\mu_x)_{x\in\Omega}\hbox{ as a Young measure}.
\end{equation}
As $L$ is continuous on $\LL$ and $\LL$ is closed, from Theorem \ref{S-T-YM} it follows that
$$
\liminf_{n\to\infty}I(u_n)\geq \int_{\Omega}\langle  L;\mu_x\rangle dx
$$
with (because \eqref{P-MT-Eq2} holds) for a.e. $x_0\in\Omega$,
\begin{equation}\label{P-MT-Eq5}
\langle L;\mu_{x_0}\rangle<\infty.
\end{equation}
Thus, to prove \eqref{P-MT-Eq0} it is sufficient to show that for a.e. $x_0\in\Omega$,
\begin{equation}\label{P-MT-Eq5-goal}
\langle L;\mu_{x_0}\rangle\geq \Z_qL(\nabla u(x_0)).
\end{equation}

\subsection*{Step 2: blow up} From \eqref{P-MT-Eq2} we deduce that there exist $f\in L^1(\Omega;[0,\infty[)$ and a finite positive Radon measure $\lambda$ on $\Omega$ with $|\supp(\lambda)|=0$ such that (up to a subsequence) $L(\nabla u_n)dx\mwto fdx+\lambda$ in the sense of measures and for a.e. $x_0\in\Omega$,
\begin{equation}\label{P-MT-Eq6}
\lim_{r\to0}\lim_{n\to\infty}\mint_{x_0+rY}L(\nabla u_n(x))dx=f(x_0)<\infty
\end{equation}
with $Y:=]-{1\over 2},{1\over 2}[^N$. As $u\in W^{1,p}(\Omega;\RR^m)$ it follows that $u$ is a.e. $L^p$-differentiable (see \cite[Theorem 3.4.2 p.129]{ziemer89}), i.e., for a.e. $x_0\in\Omega$,
\begin{equation}\label{P-MT-Eq7}
\lim_{r\to0}{1\over r^{N+p}}\big\|u(x_0+\cdot)-u(x_0)-\nabla u(x_0)y\big\|^p_{L^p({r} Y;\RR^m)}=0.
\end{equation}
From \eqref{P-MT-Eq1} we see that (up to a subsequence) for a.e. $x_0\in\Omega$,
\begin{equation}\label{P-MT-Eq8}
|u_n(x_0)-u(x_0)|^p\to0.
\end{equation}
As $C_0(\MM)$ is separable we can assert that for a.e. $x_0\in\Omega$, $x_0$ is a Lebesgue point of $\langle\Phi;\mu_{(\cdot)}\rangle$ for all $\Phi\in C_0(\MM)$, i.e., 
\begin{equation}\label{P-MT-Eq9}
\lim_{r\to 0}\mint_{x_0+rY}\langle\Phi,\mu_x\rangle dx=\langle\Phi,\mu_{x_0}\rangle\hbox{ for all }\Phi\in C_0(\MM).
\end{equation} 
Fix any $x_0\in\Omega$  such that \eqref{P-MT-Eq5}, \eqref{P-MT-Eq6}, \eqref{P-MT-Eq7}, \eqref{P-MT-Eq8} and \eqref{P-MT-Eq9} hold and fix $r_0>0$ such that $x_0+rY\subset\Omega$ for all $r\in]0,r_0]$. For each $n\geq 1$ and each $r\in]0,r_0]$, let $u_n^r\in W^{1,p}(Y;\RR^m)$ and a family  $(\mu_{y}^r)_{y\in Y}$ of probability measures  on $\MM$ be given by
$$
\left\{
\begin{array}{ll}
u_n^r(y):={1\over r}\left(u_n(x_0+ry)-u_n(x_0)\right)\\
\mu_y^r:=\mu_{x_0+ry}.
\end{array}
\right.
$$
Then \eqref{P-MT-Eq6} can be rewritten as
\begin{equation}\label{P-MT-Eq6-Bis}
\lim_{r\to0}\lim_{n\to\infty}\int_{Y}L(\nabla u_n^r(x))dx<\infty.
\end{equation}
Taking \eqref{P-MT-Eq3} into account it is easy to see that for every $r\in]0,r_0]$, $\{\nabla u_n^r\}_n$ generates $(\mu_y^r)_{y\in Y}$ as a Young measure, i.e.,
\begin{equation}\label{P-MT-Eq10}
\Phi(\nabla u_n^r)\mwto\langle\Phi,\mu_{(\cdot)}^r\rangle\hbox{ in }L^\infty(Y)\hbox{ as }n\to\infty\hbox{ for all }\Phi\in C_0(\MM),
\end{equation}
and using \eqref{P-MT-Eq9} it is clear that
\begin{equation}\label{P-MT-Eq11}
\langle\Phi;\mu_{(\cdot)}^r\rangle\mwto\langle\Phi;\mu_{x_0}\rangle\hbox{ in }L^\infty(Y)\hbox{ as }r\to0\hbox{ for all }\Phi\in C_0(\MM).
\end{equation} 
On the other hand, we have
\begin{eqnarray*}
\|u_{n}^{r}-l_{\nabla u(x_0)}\|^p_{L^p(Y;\RR^m)}&=&\int_{Y}|u_{n}^{r}(y)-l_{\nabla u(x_0)}(y)|^pdy\\
&=&{1\over r^{N+p}}\|u_n(x_0+\cdot)-u_n(x_0)-l_{\nabla u(x_0)}\|^p_{L^p(rY;\RR^m)},
\end{eqnarray*}
and consequently
\begin{eqnarray*}
\|u_{n}^r-l_{\nabla u(x_0)}\|^p_{L^p(Y;\RR^m)}&\leq&{c\over r^{N+p}}\|u_n-u\|^p_{L^p(\Omega;\RR^m)}+{c\over r^{N+p}}|u_n(x_0)-u(x_0)|^p\\
&&+{c\over r^{N+p}}\|u(x_0+\cdot)-u(x_0)-l_{\nabla u(x_0)}\big\|^p_{L^p(rY;\RR^m)}
\end{eqnarray*}
with $c>0$ which only depends on $p$. Using \eqref{P-MT-Eq1}, \eqref{P-MT-Eq8} and \eqref{P-MT-Eq7} we deduce that
\begin{equation}\label{P-MT-Eq12}
\lim_{r\to0}\lim_{n\to\infty}\|u_n^r-l_{\nabla u(x_0)}\|_{L^p(Y;\RR^m)}=0.
\end{equation}
According to \eqref{P-MT-Eq12}, \eqref{P-MT-Eq6-Bis} and \eqref{P-MT-Eq10} together with \eqref{P-MT-Eq11}, by diagonalization there exists a mapping $n\to r_n$ decreasing to $0$ such that:
\begin{eqnarray}
&&\left\{
\begin{array}{l}
v_n\to l_{\nabla u(x_0)}\hbox{ in }L^p(Y;\RR^m)\\
\lim\limits_{n\to\infty}\int_{Y}L(\nabla v_n(y))dy<\infty,\hbox{ and }\sup\limits_n\int_YL(\nabla v_n(y))dy<\infty;
\end{array}
\right.\label{Hyp-Cpq}\\
&&\{\nabla v_n\}_n\hbox{ generates }\mu_{x_0}\hbox{ as a Young measure}.\label{Hyp-CT-YM}
\end{eqnarray}
where $v_n:=u_n^{r_n}$. 

\subsection*{Step 3: using (C\boldmath$_{p,q}$)\unboldmath} According to \eqref{Hyp-Cpq}, by (C$_{p,q}$) there exists $\{w_n\}_n\subset l_{\nabla u(x_0)}+W^{1,q}_0(Y;\RR^m)$ such that
$$
\left\{
\begin{array}{l}
|\nabla v_n-\nabla w_n|\to0\hbox{ in measure}\\
L(\nabla w_n)\hbox{ is equi-integrable,}
\end{array}
\right.
$$
hence, by \eqref{Hyp-CT-YM} and  Lemma \ref{Lemma1-YM}, $\{\nabla w_n\}_n$ generates $\mu_{x_0}$ as a Young measure. In particular, $\sup_n\int_YL(\nabla w_n(y))dy<\infty$, and so $\nabla w_n(y)\in\LL$ for all $n\geq 1$ and a.a. $y\in Y$. As $L$ is continuous on $\LL$ and $\LL$ is closed, taking \eqref{P-MT-Eq5} into account, from Theorem \ref{C-T-YM} we deduce that
\begin{equation}\label{P-MT-Eq13}
\lim_{n\to\infty}\int_YL(\nabla w_n(y))dy=\langle L;\mu_{x_0}\rangle.
\end{equation}
On the other hand, by definition of $\Z_qL$, we see that 
$$
\int_YL(\nabla w_n(y))dy\geq \Z_qL(\nabla u(x_0))\hbox{ for all }n\geq 1,
$$
and \eqref{P-MT-Eq5-goal} follows by letting $n\to\infty$ and using \eqref{P-MT-Eq13}. 
\end{proof}

\begin{remark}
Analyzing the step  2 of the proof of (\ref{FI-MTh-1}), it is easily seen that we have also proved the following lemma (that we will used in the proof of Theorem \ref{Main-Theorem-Bis}).
\begin{lemma}\label{blow-up-lemma}
Let $t\in]0,1[$, let $p\in]1,\infty[$, let $G:\MM\to[0,\infty]$ be a Borel measurable and $p$-coercive function, let $u\in W^{1,p}(\Omega;\RR^m)$, let $\{u_n\}_n\subset W^{1,p}(\Omega;\RR^m)$  and let $(\mu_x)_{\in\Omega}$ be a family of probability measures on $\MM$. Assume that
$$
\left\{
\begin{array}{l}
u_n\to u\hbox{ in }L^p(\Omega;\RR^m)\\
\displaystyle\sup_n\int_\Omega G(\nabla u_n(x))dx<\infty\\
\{t\nabla u_n\}_n\hbox{ generates }(\mu_x)_{x\in\Omega} \hbox{ as a Young measure}.
\end{array}
\right.
$$
Then, for a.e. $x_0\in\Omega$, there exists $\{v_n\}_n\subset W^{1,p}(Y;\RR^m)$ such that
$$
\left\{
\begin{array}{l}
v_n\to l_{\nabla u(x_0)}\hbox{ in }L^p(\Omega;\RR^m)\\
\displaystyle\sup_n\int_Y G(\nabla v_n(x))dx<\infty\\
\{t\nabla v_n\}_n\hbox{ generates }\mu_{x_0} \hbox{ as a Young measure}.
\end{array}
\right.
$$
\end{lemma}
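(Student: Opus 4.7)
The plan is to mimic almost verbatim the blow-up and diagonalisation already carried out in Step~2 of the proof of \eqref{FI-MTh-1}, with $L$ replaced by the generic $p$-coercive integrand $G$ and with the scaling factor $t$ tracked through the Young-measure hypothesis. That earlier argument used $L$ only through its $p$-coercivity and through the Young-measure property of the gradient sequence, and both features have direct analogues in the hypotheses here.

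Concretely, upon passing to a subsequence I would first obtain the weak-$*$ decomposition $G(\nabla u_n)\,dx\mwto f\,dx+\lambda$ with $f\in L^1(\Omega;[0,\infty[)$ and $\lambda$ a finite nonnegative Radon measure on $\Omega$ with $|\supp(\lambda)|=0$. I would then fix the full-measure set of points $x_0\in\Omega$ at which simultaneously (i) $\lim_{r\to0}\lim_{n\to\infty}\mint_{x_0+rY}G(\nabla u_n(x))dx=f(x_0)<\infty$, (ii) $u$ is $L^p$-differentiable, (iii) $|u_n(x_0)-u(x_0)|\to 0$ (after a further subsequence extraction), and (iv) $x_0$ is a Lebesgue point of $\langle\Phi;\mu_{(\cdot)}\rangle$ for every $\Phi$ in a countable dense subset of $C_0(\MM)$, and hence for every $\Phi\in C_0(\MM)$ by separability.

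Fixing such an $x_0$, I would set $u_n^r(y):=r^{-1}(u_n(x_0+ry)-u_n(x_0))$ and $\mu_y^r:=\mu_{x_0+ry}$ for $r$ small enough that $x_0+rY\subset\Omega$. A direct affine change of variable in the defining relation of a Young measure shows that for each fixed $r$ the sequence $\{t\nabla u_n^r\}_n$ generates $(\mu_y^r)_{y\in Y}$ as a Young measure, since $t\nabla u_n^r(y)=t\nabla u_n(x_0+ry)$; property (iv) then gives $\langle\Phi;\mu_{(\cdot)}^r\rangle\mwto\langle\Phi;\mu_{x_0}\rangle$ in $L^\infty(Y)$ as $r\to 0$ for every $\Phi\in C_0(\MM)$. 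The familiar triangle estimate
\[
\|u_n^r-l_{\nabla u(x_0)}\|_{L^p(Y;\RR^m)}^p\leq\tfrac{c}{r^{N+p}}\bigl(\|u_n-u\|_{L^p(\Omega;\RR^m)}^p+|u_n(x_0)-u(x_0)|^p+\|u(x_0+\cdot)-u(x_0)-l_{\nabla u(x_0)}\|_{L^p(rY;\RR^m)}^p\bigr),
\]
combined with (ii) and (iii), yields $\lim_{r\to 0}\lim_{n\to\infty}\|u_n^r-l_{\nabla u(x_0)}\|_{L^p(Y;\RR^m)}=0$, while (i) supplies $\limsup_{n}\int_Y G(\nabla u_n^r(y))\,dy<\infty$ for every $r$ small.

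The only step requiring care --- and \emph{arguably the main obstacle} --- is the diagonal extraction $r=r_n\to 0$ producing $v_n:=u_n^{r_n}$, since the three desired asymptotics (the $L^p$-convergence to $l_{\nabla u(x_0)}$, the uniform $G$-energy bound, and the Young-measure convergence of $\{t\nabla v_n\}_n$ to $\mu_{x_0}$) must be enforced simultaneously. This is possible precisely because $C_0(\MM)$ is separable, so the weak-$*$ convergences entering (iv) and the Young-measure statement for $\{t\nabla u_n^r\}_n$ can be tested against a single countable dense family, and a standard diagonal lemma delivers the required $r_n\downarrow 0$. The factor $t$ appears throughout only as a fixed positive multiplier of gradients, and is therefore transparent to every step of the argument.
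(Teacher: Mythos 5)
Your proposal is correct and coincides with the paper's own proof: the lemma is obtained in the paper precisely by rereading Step~2 of the proof of \eqref{FI-MTh-1} (blow-up via the decomposition $G(\nabla u_n)dx\mwto f\,dx+\lambda$, $L^p$-differentiability, Lebesgue points of $\langle\Phi;\mu_{(\cdot)}\rangle$ through separability of $C_0(\MM)$, and a diagonal extraction $r=r_n$), with the factor $t$ passing through unchanged exactly as you observe.
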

\end{remark}

\begin{proof}[\bf Proof of (\ref{FI-MTh-2})]
Given $u\in\Aff(\Omega;\RR^m)$ there exists a finite family $\{U_i\}_{i\in I}$ of open disjoint subsets of $\Omega$ such that $|\Omega\setminus\cup_{i\in I}U_i|=0$ and, for each $i\in I$, $|\partial U_i|=0$ and $\nabla u(x)=\xi_i$ in $U_i$ with $\xi\in\MM$. Thus
\begin{equation}\label{EqUaTiOn-ls-0}
\int_\Omega\Z_qL(\nabla u(x))dx=\sum_{i\in I}|U_i|\Z_qL(\xi_i).
\end{equation}
Recalling that $q\geq p$ and using Lemma \ref{Localization-Lemma-ZG}, for each $i\in I$, we can assert that there exists $\{\phi_n^i\}_n\subset W^{1,p}_0(U_i;\RR^m)$ such that:
\begin{eqnarray}
&&\lim_{n\to\infty}\|\phi_n^i\|_{L^p(U_i;\RR^m)}=0;\label{EqUaTiOn-ls-1}\\
&&\lim_{n\to\infty}\mint_{U_i}L(\xi_i+\nabla\phi_n^i(x))dx=\Z_qL(\xi_i).\label{EqUaTiOn-ls-2}
\end{eqnarray}

Define $\{u_n\}_n\subset W^{1,p}(\Omega;\RR^m)$ by
$$
u_n(x):=u(x)+\phi^i_n(x)\hbox{ if }x\in U_i.
$$
Using \eqref{EqUaTiOn-ls-1} it easy to see that $\|u_n-u\|_{L^p(\Omega;\RR^m)}\to0$, and combining \eqref{EqUaTiOn-ls-2} with \eqref{EqUaTiOn-ls-0} we deduce that
$$
\lim_{n\to\infty}L(\nabla u_n(x))dx=\int_\Omega\Z_qL(\nabla u(x))dx,
$$
and the result follows.
\end{proof}

\begin{remark}
Analyzing the previous proof, it is easily seen that we have in fact proved the following lemma.
\begin{lemma}\label{Z_qG-Second-Lemma}
Let $p\in]1,\infty[$ and $q\in[p,\infty]$ and let $G:\MM\to[0,\infty]$ be a Borel measurable function. For every $u\in\Aff(\Omega;\RR^m)$ there exists $\{u_n\}_n\subset W^{1,p}(\Omega;\RR^m)$ such that
$$
\left\{
\begin{array}{l}
\displaystyle\lim_{n\to\infty}\|u_n-u\|_{L^p(\Omega;\RR^m)}=0\\
\displaystyle\lim_{n\to\infty}\int_\Omega G(\nabla u_n(x))dx=\int_{\Omega}\Z_qG(\nabla u(x))dx.
\end{array}
\right.
$$
\end{lemma}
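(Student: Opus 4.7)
The plan is to essentially repeat the proof of \eqref{FI-MTh-2}, observing that it does not actually use any regularity of the integrand beyond Borel measurability. First I would decompose $u\in\Aff(\Omega;\RR^m)$ using its affine pieces: there exists a finite family $\{U_i\}_{i\in I}$ of pairwise disjoint open subsets of $\Omega$ with $|\Omega\setminus\bigcup_{i\in I}U_i|=0$, $|\partial U_i|=0$, and $\nabla u(x)=\xi_i$ on $U_i$ for some $\xi_i\in\MM$. Consequently
$$
\int_\Omega\Z_qG(\nabla u(x))dx=\sum_{i\in I}|U_i|\,\Z_qG(\xi_i).
$$

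Next I would invoke Lemma \ref{Localization-Lemma-ZG} applied to $G$ on each bounded open set $U_i$ with matrix $\xi_i$, yielding for each $i\in I$ a sequence $\{\phi_n^i\}_n\subset W^{1,q}_0(U_i;\RR^m)\subset W^{1,p}_0(U_i;\RR^m)$ (here I use $q\geq p$ together with the boundedness of $U_i$) such that $\|\phi_n^i\|_{L^q(U_i;\RR^m)}\to 0$ (hence $\|\phi_n^i\|_{L^p(U_i;\RR^m)}\to 0$) and
$$
\lim_{n\to\infty}\mint_{U_i}G(\xi_i+\nabla\phi_n^i(x))dx=\Z_qG(\xi_i).
$$
Then I would define $u_n\in W^{1,p}(\Omega;\RR^m)$ piecewise by $u_n(x):=u(x)+\phi_n^i(x)$ whenever $x\in U_i$; the zero boundary values of $\phi_n^i$ on $\partial U_i$ guarantee that the pieces glue together into a genuine Sobolev function on $\Omega$.

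Finally, to conclude I would verify the two required limits. The $L^p$-convergence $\|u_n-u\|_{L^p(\Omega;\RR^m)}\to 0$ is immediate by summing $\|\phi_n^i\|_{L^p(U_i;\RR^m)}^p$ over the finite index set $I$. For the energy convergence, on each $U_i$ we have $\nabla u_n(x)=\xi_i+\nabla\phi_n^i(x)$, so
$$
\int_\Omega G(\nabla u_n(x))dx=\sum_{i\in I}\int_{U_i}G(\xi_i+\nabla\phi_n^i(x))dx=\sum_{i\in I}|U_i|\mint_{U_i}G(\xi_i+\nabla\phi_n^i(x))dx,
$$
and passing to the limit, using the second property of the $\phi_n^i$ and comparing with the initial formula for $\int_\Omega\Z_qG(\nabla u(x))dx$, yields the desired equality. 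There is no real obstacle here; the only subtle point is to notice that one genuinely has equality (not merely ``$\leq$''), which follows because $\int_{U_i}G(\xi_i+\nabla\phi_n^i)dx\geq|U_i|\Z_qG(\xi_i)$ by the very definition of $\Z_qG$ (using $q\geq p$ to ensure admissibility of test fields for the quasiconvexification formula on $U_i$), so the construction automatically delivers both inequalities.
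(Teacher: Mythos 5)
Your proposal is correct and follows essentially the same route as the paper: the paper proves exactly this by decomposing $u$ into its affine pieces, applying Lemma \ref{Localization-Lemma-ZG} on each $U_i$, and gluing the resulting $\phi_n^i\in W^{1,q}_0(U_i;\RR^m)\subset W^{1,p}_0(U_i;\RR^m)$, then records in a remark that the argument yields Lemma \ref{Z_qG-Second-Lemma} verbatim. Your closing observation about the lower bound $\int_{U_i}G(\xi_i+\nabla\phi_n^i)\,dx\geq|U_i|\Z_qG(\xi_i)$ is harmless but unnecessary, since Lemma \ref{Localization-Lemma-ZG} already asserts convergence of the averages to $\Z_qG(\xi_i)$, not merely an upper bound.
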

\end{remark}

\subsection{Proof of Theorem \ref{Main-Theorem}(b)} It is sufficient to prove that
$$
\overline{I}(u)\leq\int_\Omega\Z_qL(\nabla u(x))dx\hbox{ for all }u\in W^{1,p}(\Omega;\RR^m)\setminus\Aff(\Omega;\RR^m).
$$
Let $u\in W^{1,p}(\Omega;\RR^m)\setminus\Aff(\Omega;\RR^m)$ be such that $\int_\Omega\Z_qL(\nabla u(x))dx<\infty$. By (${\rm H}_{p,q}$) there exists $\{u_{k}\}_{k}\subset\Aff(\Omega;\RR^m)$ such that
\begin{equation}\label{EqT-Th1-bIs}
\left\{
\begin{array}{l}
\displaystyle\lim_{k\to\infty}\|u_{k}-u\|_{L^p(\Omega;\RR^m)}=0\\
\displaystyle\limsup_{k\to\infty}\int_\Omega\Z_qL(\nabla u_{k}(x))dx\leq\int_\Omega\Z_qL(\nabla u(x))dx.
\end{array}
\right.
\end{equation}
From Lemma \ref{Z_qG-Second-Lemma} we deduce that for every $k\geq 1$, there exists $\{u_{n,k}\}_n\subset W^{1,p}(\Omega;\RR^m)$ such that
\begin{equation}\label{EqT-Th2-bIs}
\left\{
\begin{array}{l}
\displaystyle\lim_{n\to\infty}\|u_{n,k}-u_{k}\|_{L^p(\Omega;\RR^m)}=0\\
\displaystyle\lim_{n\to\infty}\int_\Omega L(\nabla u_{n,k}(x))dx=\int_{\Omega}\Z_qL(\nabla u_{k}(x))dx.
\end{array}
\right.
\end{equation}
Combining \eqref{EqT-Th2-bIs} with \eqref{EqT-Th1-bIs}, we conclude that
$$
\left\{
\begin{array}{l}
\displaystyle\limsup_{k\to\infty}\lim_{n\to\infty}\|u_{n,k}-u\|_{L^p(\Omega;\RR^m)}=0\\
\displaystyle\limsup_{k\to\infty}\lim_{n\to\infty}\int_\Omega L(\nabla u_{n,k}(x))dx\leq\int_{\Omega}\Z_qL(\nabla u(x))dx.
\end{array}
\right.
$$
and the result follows by diagonalization. $\blacksquare$ 


\section{Proof of Theorem \ref{Main-Theorem-Bis}}

In this section we prove Theorem \ref{Main-Theorem-Bis}.

\subsection{Proof of Theorem \ref{Main-Theorem-Bis}(a)} We are going to prove the following two inequalities: 
\begin{eqnarray}
&&\overline{I}(u)\geq\int_\Omega\widehat{\Z_qL}(\nabla u(x))dx\hbox{ for all }u\in W^{1,p}(\Omega;\RR^m);\label{FI-MT2-1}\\
&&\overline{I}(u)\leq\int_\Omega\widehat{\Z_qL}(\nabla u(x))dx\hbox{ for all }u\in \Aff(\Omega;\RR^m).\label{FI-MT2-2}
\end{eqnarray}

\begin{proof}[\bf Proof of (\ref{FI-MT2-1})]
Consider $u\in W^{1,p}(\Omega;\RR^m)$  and $\{u_n\}_n\subset W^{1,p}(\Omega;\RR^m)$ such that 
\begin{equation}\label{P-MT-Eq1-bis}
\|u_n-u\|_{L^p(\Omega;\RR^m)}\to0,
\end{equation}
and prove that
\begin{equation}\label{aim-eq}
\liminf_{n\to\infty}I(u_n)\geq \int_\Omega\widehat{\Z_qL}(\nabla u(x))dx.
\end{equation}
 Without loss of generality we can assume that
\begin{equation}\label{Eq-Fundamental}
\infty>\liminf_{n\to\infty}I(u_n)=\lim_{n\to\infty}I(u_n) \hbox{ and so }\sup_n\int_\Omega L(\nabla u_n(x))dx<\infty.
\end{equation}
\subsection*{Step 1: using ru-usc assumption, i.e., \boldmath(R$_1$)\unboldmath} According to the definition of $\widehat{\Z_qL}$, see \eqref{Def-Z-hat-i}, and Fatou's lemma, to establish \eqref{aim-eq} it is sufficient to show that 
\begin{equation}\label{P-MT-Eq0-bis}
\liminf_{n\to\infty}I(u_n)\geq \int_\Omega{\Z_qL}(t\nabla u(x))dx.
\end{equation}
for all $t\in]0,1[$. On the other hand, for any $c>0$,
\begin{equation}\label{Eq-Fundamental1}
\int_\Omega L(t\nabla u_n(x))dx\leq(1+\Delta^c_{L}(t))\int_\Omega L(\nabla u_n(x))dx
+c|\Omega|\Delta^c_{L}(t)
\end{equation}
for all $n\geq 1$ and all $t\in]0,1[$, where $\Delta_{L}^c(t)$ is given by \eqref{Ru-usc-Z-hat-i}, and consequently
$$
\liminf_{t\to1}\liminf_{n\to\infty}\int_\Omega L(t\nabla u_n(x))dx\leq\liminf_{n\to\infty}\int_\Omega L(\nabla u_n(x))dx
$$
because $L$ is ru-usc, i.e., $\limsup_{t\to1}\Delta_{L}^c(t)\leq0$ for some $c>0$. Hence, we are reduced to prove that for every $t\in]0,1[$,
\begin{equation}\label{aim-eq-1}
\liminf_{n\to\infty}\int_\Omega L(t\nabla u_n(x))dx\geq \int_\Omega{\Z_qL}(t\nabla u(x))dx.
\end{equation}

\subsection*{Step 2: localization} Fix $t\in]0,1[$. Using \eqref{Eq-Fundamental} and \eqref{Eq-Fundamental1} we see that
\begin{equation}\label{P-MT-Eq2-BIS-bis}
\sup_n\int_\Omega L(t\nabla u_n(x))dx<\infty.
\end{equation}
Since $L$ is $p$-coercive, $\sup_n\int_\Omega|t\nabla u_n(x)|^pdx<\infty$ by \eqref{P-MT-Eq2-BIS-bis}. From Theorem \ref{ExiStenCE-Young-measures} we deduce that there exists a family $(\mu_x)_{x\in\Omega}$ of probability measures on $\MM$ such that (up to a subsequence)
\begin{equation}\label{P-MT-Eq3-bis}
\{t\nabla u_n\}_n\hbox{ generates }(\mu_x)_{x\in\Omega}\hbox{ as a Young measure}.
\end{equation}
On the other hand, \eqref{Eq-Fundamental} implies that for every $n\geq 1$ and a.e. $x\in\Omega$, $\nabla u_n(x)\in\LL$, hence 
\begin{equation}\label{Eq-Blow-up-lemma-1}
t\nabla u_n(x)\in t\overline{\LL}\hbox{ for all }n\geq 1\hbox{ and a.a. }x\in\Omega.
\end{equation}
By (R$_2$) we have $t\overline{\LL}\subset{\rm int}(\LL)$, where ${\rm int}(\LL)$ denotes the interior of $\LL$, and consequently $L$ is continuous on $t\overline{\LL}$ because $L$ is continuous on  ${\rm int}(\LL)$. Noticing that $t\overline{\LL}$ is closed and taking \eqref{Eq-Blow-up-lemma-1} into account, from Theorem \ref{S-T-YM} it follows that
$$
\liminf_{n\to\infty}\int_\Omega L(t\nabla u_n(x))dx\geq \int_{\Omega}\langle L;\mu_x\rangle dx
$$
with (because \eqref{P-MT-Eq2-BIS-bis} holds) for a.e. $x_0\in\Omega$,
\begin{equation}\label{P-MT-Eq5-bis}
\langle L;\mu_{x_0}\rangle<\infty.
\end{equation}
Thus, to prove \eqref{aim-eq-1} it is sufficient to show that for a.e. $x_0\in\Omega$,
\begin{equation}\label{P-MT-Eq5-goal-bis}
\langle L;\mu_{x_0}\rangle\geq \Z_qL(t\nabla u(x_0)).
\end{equation}
\subsection*{Step 3: using Lemma \ref{blow-up-lemma} and (\boldmath$\widehat{\rm C}_{p,q}$\unboldmath)} Taking \eqref{P-MT-Eq1-bis}, \eqref{Eq-Fundamental} and \eqref{P-MT-Eq3-bis} into account, from Lemma \ref{blow-up-lemma} we deduce that for a.e. $x_0\in\Omega$, there exists $\{v_n\}_n\subset W^{1,p}(Y;\RR^m)$ such that:
\begin{equation}\label{blow-up-lemma-eq}
\left\{
\begin{array}{l}
v_n\to l_{\nabla u(x_0)}\hbox{ in }L^p(\Omega;\RR^m)\\
\displaystyle\sup_n\int_Y L(\nabla v_n(x))dx<\infty;
\end{array}
\right.
\end{equation}
\begin{equation}\label{blow-up-lemma-eq-1}
\{t\nabla v_n\}_n\hbox{ generates }\mu_{x_0} \hbox{ as a Young measure}.
\end{equation}
Fix any $x_0\in\Omega$ such that \eqref{blow-up-lemma-eq} and \eqref{blow-up-lemma-eq-1} hold. By $(\widehat{\rm C}_{p,q})$ there exist $\{w_n\}_n\subset l_{t\nabla u(x_0)}+W^{1,q}_0(Y;\RR^m)$ and $s\in]0,1[$ such that
$$
\left\{
\begin{array}{l}
|t\nabla v_n-\nabla w_n|\to0\hbox{ in measure}\\
L(\nabla w_n)\hbox{ is equi-integrable}\\
\nabla w_n(y)\in s\overline{\LL}\hbox{ for all }n\geq 1\hbox{ and a.a. }y\in Y,
\end{array}
\right.
$$
hence, by \eqref{blow-up-lemma-eq-1} and  Lemma \ref{Lemma1-YM}, $\{\nabla w_n\}_n$ generates $\mu_{x_0}$ as a Young measure. As $L$ is continuous on $s\overline{\LL}$ and $s\overline{\LL}$ is closed, taking \eqref{P-MT-Eq5-bis} into account, from Theorem \ref{C-T-YM} we deduce that
\begin{equation}\label{P-MT-Eq13-bis}
\lim_{n\to\infty}\int_YL(\nabla w_n(y))dy=\langle L;\mu_{x_0}\rangle.
\end{equation}
On the other hand, by definition of $\Z_qL$, we see that 
$$
\int_YL(\nabla w_n(y))dy\geq \Z_qL(t\nabla u(x_0))\hbox{ for all }n\geq 1,
$$
and \eqref{P-MT-Eq5-goal-bis} follows by letting $n\to\infty$ and using \eqref{P-MT-Eq13-bis}. 
\end{proof}

\begin{proof}[\bf Proof of (\ref{FI-MT2-2})]
Let $u\in\Aff(\Omega;\RR^m)$ be such that $\int_\Omega\widehat{\Z_qL}(\nabla u(x))dx<\infty$. Then 
\begin{equation}\label{EQT-1}
\nabla u(x)\in\widehat{\Z_q\LL}\hbox{ for all }n\geq 1\hbox{ and a.a. }x\in\Omega,
\end{equation}
where $\widehat{\Z_q\LL}$ denotes the effective domain of $\widehat{\Z_qL}$. Since $L$ is ru-usc, also is $\Z_qL$. Moreover, by (R$_3$),
\begin{equation}\label{EQT-2}
t\overline{\Z_q\LL}\subset{\rm int}(\Z_q\LL)\hbox{ for all }t\in]0,1[,
\end{equation}
where $\overline{\Z_q\LL}$ (resp. ${\rm int}(\Z_q\LL)$) denotes the closure (resp. the interior) of $\Z_q\LL$, and $\Z_qL$ is continuous on ${\rm int}(\Z_q\LL)$ by Lemma \ref{Fonseca-Lemma}. From Theorem \ref{Extension-Result-for-ru-usc-Functions} it follows that:
\begin{eqnarray}
&&
\widehat{\Z_qL}(\xi)=\left\{
\begin{array}{ll}
\Z_qL(\xi)&\hbox{if }\xi\in{\rm int}(\Z_q\LL)\\
\lim\limits_{t\to 1}\Z_qL(t\xi)&\hbox{if }\xi\in\partial (\Z_q\LL)\\
\infty&\hbox{otherwise{\rm;}}
\end{array}
\right.\label{EQT-3}
\\
&&\widehat{\Z_qL}\hbox{ is ru-usc, i.e.,} \limsup_{t\to1}\Delta_{\widehat{\Z_qL}}^c(t)\leq 0\hbox{ for some }c>0.\label{EQT-4}
 \end{eqnarray}
 By \eqref{EQT-3} we have $\widehat{\Z_q\LL}\subset\overline{\Z_q\LL}$, and so $t\nabla u(x)\in{\rm int}(\Z_q\LL)$ for all $t\in]0,1[$ because of \eqref{EQT-1} and \eqref{EQT-2}. Thus
 $$
 \int_\Omega\Z_qL(t\nabla u(x))dx\leq \big(1+\Delta_{\widehat{\Z_qL}}^c(t)\big)\int_\Omega\widehat{\Z_qL}(\nabla u(x))dx+c|\Omega|\Delta_{\widehat{\Z_qL}}^c(t)
 $$
 for all $t\in]0,1[$, and consequently
 \begin{equation}\label{EQT-5}
 \limsup_{t\to1}\int_\Omega\Z_qL(t\nabla u(x))dx\leq\int_\Omega\widehat{\Z_qL}(\nabla u(x))dx
 \end{equation}
 because \eqref{EQT-4} holds. On the other hand, it is clear that
\begin{equation}\label{EQT-bis-1}
\lim_{t\to 1}\|tu-u\|_{L^p(\Omega;\RR^m)}=0.
\end{equation} 
As $u\in\Aff(\Omega;\RR^m)$ we have  $tu\in\Aff(\Omega;\RR^m)$ for all $t\in]0,1[$. From Lemma \ref{Z_qG-Second-Lemma} we deduce that for each $t\in]0,1[$, there exists $\{u_{n,t}\}_n\subset W^{1,p}(\Omega;\RR^m)$ such that
$$
\left\{
\begin{array}{l}
\displaystyle\lim_{n\to\infty}\|u_{n,t}-tu\|_{L^p(\Omega;\RR^m)}=0\\
\displaystyle\lim_{n\to\infty}\int_\Omega L(\nabla u_{n,t}(x))dx=\int_{\Omega}\Z_qL(t\nabla u(x))dx,
\end{array}
\right.
$$
and so
$$
\left\{
\begin{array}{l}
\displaystyle\lim_{t\to 1}\lim_{n\to\infty}\|u_{n,t}-u\|_{L^p(\Omega;\RR^m)}=0\\
\displaystyle\limsup_{t\to1}\lim_{n\to\infty}\int_\Omega L(\nabla u_{n,t}(x))dx\leq\int_\Omega\widehat{\Z_qL}(\nabla u(x))dx
 \end{array}
 \right.
 $$
 by using \eqref{EQT-bis-1} and \eqref{EQT-5}, and the result follows  by diagonalization.
\end{proof}

\subsection{Proof of Theorem \ref{Main-Theorem-Bis}(b)} It is sufficient to prove that
$$
\overline{I}(u)\leq\int_\Omega\widehat{\Z_qL}(\nabla u(x))dx\hbox{ for all }u\in W^{1,p}(\Omega;\RR^m)\setminus\Aff(\Omega;\RR^m).
$$
Let $u\in W^{1,p}(\Omega;\RR^m)\setminus\Aff(\Omega;\RR^m)$ be such that $\int_\Omega\widehat{\Z_qL}(\nabla u(x))dx<\infty$. Arguing as in the proof of the inequality \eqref{FI-MT2-2}, we have \eqref{EQT-bis-1} and  \eqref{EQT-5} and for every $t\in]0,1[$,
$$
\left\{
\begin{array}{l}
\displaystyle\int_\Omega\Z_qL(t\nabla u(x))dx<\infty\\
t\nabla u(x)\in t\overline{\Z_q\LL} \hbox{ for a.a. }x\in\Omega.
\end{array}
\right.
$$
Fix any $t\in]0,1[$. By ($\widehat{\rm H}_{p,q}$) there exists $\{u_{k,t}\}_{k}\subset\Aff(\Omega;\RR^m)$ such that
\begin{equation}\label{EqT-Th1}
\left\{
\begin{array}{l}
\displaystyle\lim_{k\to\infty}\|u_{k,t}-tu\|_{L^p(\Omega;\RR^m)}=0\\
\displaystyle\limsup_{k\to\infty}\int_\Omega\Z_qL(\nabla u_{k,t}(x))dx\leq\int_\Omega\Z_qL(t\nabla u(x))dx.
\end{array}
\right.
\end{equation}
Fix any $k\geq 1$. By Lemma \ref{Z_qG-Second-Lemma} there exists $\{u_{n,k,t}\}_n\subset W^{1,p}(\Omega;\RR^m)$ such that
\begin{equation}\label{EqT-Th2}
\left\{
\begin{array}{l}
\displaystyle\lim_{n\to\infty}\|u_{n,k,t}-u_{k,t}\|_{L^p(\Omega;\RR^m)}=0\\
\displaystyle\lim_{n\to\infty}\int_\Omega L(\nabla u_{n,k,t}(x))dx=\int_{\Omega}\Z_qL(\nabla u_{k,t}(x))dx.
\end{array}
\right.
\end{equation}
Combining \eqref{EqT-Th2}, \eqref{EqT-Th1} with \eqref{EQT-bis-1} together with \eqref{EQT-5}, we conclude that
$$
\left\{
\begin{array}{l}
\displaystyle\limsup_{t\to1}\limsup_{k\to\infty}\lim_{n\to\infty}\|u_{n,k,t}-u\|_{L^p(\Omega;\RR^m)}=0\\
\displaystyle\limsup_{t\to1}\limsup_{k\to\infty}\lim_{n\to\infty}\int_\Omega L(\nabla u_{n,k,t}(x))dx\leq\int_{\Omega}\widehat{\Z_qL}(\nabla u(x))dx.
\end{array}
\right.
$$
and the result follows by diagonalization. $\blacksquare$ 


\section{Some applications}

In this section we apply our two main results, i.e., Theorems \ref{Main-Theorem} and \ref{Main-Theorem-Bis}, to the relaxation of nonconvex integrals with exponential-growth.

\subsection{General conditions on \boldmath$L$\unboldmath\  for the validity of (C\boldmath$_{p,p}$\unboldmath) and (\boldmath$\widehat{\rm C}_{p,p}$)\unboldmath\  when \boldmath$p\in]N,\infty[$\unboldmath}  (In what follows, $\LL$ denotes the effective domain of $L$.) Let us consider the following two conditions on $L$:
\begin{itemize}
\item[(A$_1$)] there exists $\lambda:]1,\infty[\to]0,1[$ such that $\lambda(R)\to 1$ as $R\to\infty$ and
$$
\lim_{R\to\infty}\sup\left\{{L(\lambda(R)\xi)\over L(\xi)}:\xi\in \LL\hbox{ and }|\xi|\geq R\right\}=0;
$$
\item[(A$_2$)] there exists $\alpha_1>0$ such that 
$$
L(t\xi)\leq \alpha_1(1+L(\xi))
$$
for all $\xi\in\LL$ and all $t\in[0,1]$.
\end{itemize}

The following theorem was proved in \cite[Theorem 1.7]{jpm11}.

\begin{theorem}\label{Appli-Theorem}
 Assume that $L$ is finite, i.e., $\LL=\MM$, and continuous. Assume futhermore that $L$ satisfies  {\rm (A$_1$)}, {\rm (A$_2$)} and 
 \begin{itemize}
\item[(A$_3$)] for every $\xi\in\MM$, there exist $\eta_\xi>0$ and $\alpha_{2,\xi}>0$ such that
$$
L(\xi+s(\zeta-\xi)+a)\leq \alpha_{2,\xi}(1+L(\zeta))
$$
for all $s\in[0,1]$, all $\zeta,a\in\MM$ with $|a|\leq \eta_\xi$.
\end{itemize}
Then {\rm(C$_{p,p}$)} holds for all $p\in]N,\infty[$.
\end{theorem}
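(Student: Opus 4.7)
Let $\xi\in\MM$ and $\{v_n\}\subset W^{1,p}(Y;\RR^m)$ satisfy $v_n\to l_\xi$ in $L^p$ with $M:=\sup_n\int_YL(\nabla v_n)dy<\infty$. By $p$-coercivity, $\{v_n\}$ is bounded in $W^{1,p}(Y;\RR^m)$; since $p>N$, Morrey's embedding together with Arzel\`a--Ascoli yields (up to a subsequence) $v_n\to l_\xi$ uniformly on $\overline Y$. The plan is to construct $w_n\in l_\xi+W^{1,p}_0(Y;\RR^m)$ through three successive modifications of $v_n$, each effected on a set of vanishing Lebesgue measure so that $|\nabla v_n-\nabla w_n|\to 0$ in measure is automatic: (I) a Lipschitz truncation yielding $p$-equi-integrable gradients; (II) a selective down-scaling taming the possibly super-polynomial growth of $L$; (III) a boundary cutoff matching the affine datum $l_\xi$.

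For (I), apply the Acerbi--Fusco / Fonseca--M\"uller--Pedregal truncation lemma to produce $\{\tilde v_n\}\subset W^{1,\infty}(Y;\RR^m)$ with $|\{\tilde v_n\ne v_n\}|\to 0$, $\tilde v_n\to l_\xi$ in $L^p$ and uniformly, and $\{|\nabla\tilde v_n|^p\}$ equi-integrable; condition {\rm(A$_2$)} ensures $\sup_n\int_YL(\nabla\tilde v_n)dy<\infty$ by controlling $L$ on the altered set, where $\tilde v_n$ is obtained from $v_n$ by a down-scaling. For (II), choose $R_n\nearrow\infty$ slowly enough that the ``bad'' sets $E_n:=\{|\nabla\tilde v_n|>R_n\}$ satisfy $|E_n|\to 0$ and $\int_{E_n}|\nabla\tilde v_n|^pdy\to 0$ (available by the $p$-equi-integrability from (I)). Build $\hat v_n$ by replacing $\tilde v_n$ on a small neighborhood of $E_n$ by a piece obtained through a partition of unity, whose gradient equals $\lambda(R_n)\nabla\tilde v_n$ on $E_n$ and interpolates back to $\nabla\tilde v_n$ across a thin transition layer. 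On $E_n$, condition {\rm(A$_1$)} gives $L(\nabla\hat v_n)\leq\varepsilon(R_n)L(\nabla\tilde v_n)$ with $\varepsilon(R_n)\to 0$, so $\int_{E_n}L(\nabla\hat v_n)dy\to 0$. On the transition layer the values of $\nabla\tilde v_n$ lie in a compact set; covering it by finitely many balls of radius $\eta_\zeta$ supplied by {\rm(A$_3$)} and applying {\rm(A$_3$)} locally dominates $L(\nabla\hat v_n)$ by $C(1+L(\nabla\tilde v_n))$ on that layer. Together with the $p$-equi-integrability of $|\nabla\tilde v_n|^p$ and $|E_n|\to 0$, this yields equi-integrability of $\{L(\nabla\hat v_n)\}$.

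For (III), choose Lipschitz cutoffs $\phi_n:Y\to[0,1]$ equal to $1$ on $Y_n\Subset Y$, vanishing near $\partial Y$, with $|Y\setminus Y_n|\to 0$ and $\|\nabla\phi_n\|_\infty\,\|\hat v_n-l_\xi\|_\infty\to 0$ (feasible by the uniform convergence). Set $w_n:=l_\xi+\phi_n(\hat v_n-l_\xi)\in l_\xi+W^{1,p}_0(Y;\RR^m)$, so that $\nabla w_n=\xi+\phi_n(\nabla\hat v_n-\xi)+(\hat v_n-l_\xi)\otimes\nabla\phi_n$; on $Y_n$ this equals $\nabla\hat v_n$. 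On $Y\setminus Y_n$, apply {\rm(A$_3$)} at the fixed point $\xi$ with $s=\phi_n(x)\in[0,1]$, $\zeta=\nabla\hat v_n(x)$ and perturbation $a=(\hat v_n-l_\xi)\otimes\nabla\phi_n$ of norm at most $\eta_\xi$ for $n$ large, to obtain $L(\nabla w_n)\leq\alpha_{2,\xi}(1+L(\nabla\hat v_n))$ on $Y\setminus Y_n$. Since $|Y\setminus Y_n|\to 0$ and $\{L(\nabla\hat v_n)\}$ is equi-integrable, $\{L(\nabla w_n)\}$ is equi-integrable; and $|\nabla v_n-\nabla w_n|\to 0$ in measure because $\{v_n\ne w_n\}$ is contained in the union of $\{\tilde v_n\ne v_n\}$, the neighborhood of $E_n$, and $Y\setminus Y_n$, all of vanishing measure. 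The main obstacle is step (II): upgrading the uniform $L^1$-bound on $L(\nabla\tilde v_n)$ to equi-integrability when $L$ may grow essentially arbitrarily. Conditions {\rm(A$_1$)} and {\rm(A$_3$)} are calibrated precisely for this purpose---(A$_1$) furnishes the quantitative down-scaling that defeats arbitrary growth on the large-gradient set, and (A$_3$) is the minimum local regularity needed to glue the rescaled piece back to the original without undoing the gain.
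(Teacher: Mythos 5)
There is a genuine gap, and it sits exactly where you locate ``the main obstacle'': steps (I)--(II) do not produce a sequence with $\{L(\nabla\hat v_n)\}_n$ equi-integrable. First, the Lipschitz truncation of Acerbi--Fusco/Fonseca--M\"uller--Pedregal does \emph{not} obtain $\tilde v_n$ from $v_n$ ``by a down-scaling'': on the altered set the new gradient values come from a Lipschitz extension and bear no pointwise relation to $\nabla v_n$, so neither (A$_2$) (which controls only radial rescalings $t\xi$, $t\in[0,1]$) nor (A$_1$) gives any bound on $L(\nabla\tilde v_n)$ there; the claim $\sup_n\int_YL(\nabla\tilde v_n)\,dy<\infty$ is unjustified, and with $L$ of exponential growth it can fail. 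Second, your ``selective down-scaling'' prescribes $\nabla\hat v_n=\lambda(R_n)\nabla\tilde v_n$ on $E_n$ and $\nabla\tilde v_n$ outside with an interpolation in between; this is a gradient-level gluing that cannot be realized without an extra term of the form $\nabla\psi_n\otimes(\lambda(R_n)-1)\tilde v_n$ on the transition layer, which is not small, and your appeal to (A$_3$) there uses constants $\alpha_{2,\zeta}$ attached to a compact set of gradient values that grows with $n$ (radius $\sim R_n\to\infty$), so the resulting bound is not uniform in $n$. Third, even granting the construction, your exceptional set $E_n=\{|\nabla\tilde v_n|>R_n\}$ is calibrated to the $p$-equi-integrability of $|\nabla\tilde v_n|^p$; since $L$ may grow much faster than $|\cdot|^p$, equi-integrability of $|\nabla\tilde v_n|^p\chi_{Y\setminus E_n}$ does not yield equi-integrability of $L(\nabla\tilde v_n)\chi_{Y\setminus E_n}$, which is what you need.

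The mechanism the paper relies on (the proof is in \cite[Theorem 1.7]{jpm11}; the same scheme is written out here for the variant, Theorem \ref{Appli-Theorem-Bis}) repairs all three points at once: apply Sychev's biting lemma (Lemma \ref{biting-lemma}) directly to the energies $f_n=L(\nabla v_n)$, so the good sets $Y_n=\{L(\nabla v_n)\le M_n\}$ are superlevel sets of $L(\nabla v_n)$ itself and $\{L(\nabla v_n)\chi_{Y_n}\}_n$ is equi-integrable by construction; then down-scale \emph{globally}, $u_n:=\lambda(R_n)v_n$ with $R_n:=\essinf_{Y\setminus Y_n}|\nabla v_n|\to\infty$ (no gluing, hence no transition layer), using (A$_2$) on $Y_n$ and (A$_1$) on $Y\setminus Y_n$, where $|\nabla v_n|\ge R_n$. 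Only after this is the boundary cut-off performed, with $\eps_n=\sqrt{\|u_n-l_\xi\|_{L^\infty}}$ so that the perturbation $a=\nabla\phi_n\otimes(u_n-l_\xi)$ satisfies $|a|\le 2\eps_n\le\eta_\xi$ and (A$_3$) applies at the single point $\xi$ with a constant independent of $n$; your step (III) is essentially this and is fine. Convergence in measure then follows from $\|\nabla u_n-\nabla v_n\|_{L^p}\to0$ (since $\lambda(R_n)\to1$ and the gradients are $L^p$-bounded) together with $|Y\setminus Q_n|\to0$, without any need for the Lipschitz truncation of step (I), which should be discarded.
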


The following theorem is a variant of \cite[Theorem 1.7]{jpm11}.

\begin{theorem}\label{Appli-Theorem-Bis}
Assume that $\LL$ is convex, $0\in{\rm int}(\LL)$ and $L$ is continuous on ${\rm int}(\LL)$. Assume futhermore that $L$ satisfies {\rm (A$_1$)}, {\rm (A$_2$)} and 
\begin{itemize}
\item[({$\widehat{\rm A}$}$_3$)] for every $\zeta\in\LL$, there exist $\eta_\zeta>0$ and $\alpha_{2,\zeta}>0$ such that
$$
L(s(\zeta+\hat s(\hat\zeta-\zeta))+(1-s)a)\leq \alpha_{2,\zeta}(1+L(\hat\zeta))
$$
for all $s,\hat s\in[0,1]$, all $\hat\zeta\in\LL$ and all $a\in\MM$ with $|a|\leq \eta_\zeta$.
\end{itemize}
Then {\rm($\widehat{\rm C}_{p,p}$)} holds for all $p\in]N,\infty[$. 
\end{theorem}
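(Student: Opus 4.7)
The plan is to adapt the proof of Theorem \ref{Appli-Theorem} from \cite{jpm11} to the non-finite setting, incorporating the additional requirement that $\nabla w_n\in s\overline{\LL}$ for some $s\in]0,1[$. The key geometric input, used throughout, is that since $\LL$ is convex with $0\in{\rm int}(\LL)$, one has $\tau\overline{\LL}\subset{\rm int}(\LL)$ for every $\tau\in[0,1[$; this provides a built-in cushion inside $\LL$ that will accommodate the gradients produced by the construction.

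First I would fix $t\in]0,1[$ and choose interpolating scales $t<\tau<s<1$, all three close to one another. Given $\{v_n\}_n$ as in $(\widehat{\rm C}_{p,p})$, I rescale to $\tilde v_n:=\tau v_n$. By (A$_2$) the rescaled sequence retains a uniform $L$-energy bound, and its gradients sit in $\tau\overline{\LL}\subset{\rm int}(\LL)$ a.e. Since $p>N$, the Sobolev embedding $W^{1,p}(Y;\RR^m)\hookrightarrow C^{0,1-N/p}(\overline{Y};\RR^m)$ allows one to assume, after extracting a subsequence, that $\tilde v_n\to l_{\tau\xi}$ uniformly on $\overline{Y}$. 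An Acerbi--Fusco type Lipschitz truncation then produces, for each level $k$, a sequence $\tilde v_n^k$ with $\|\nabla\tilde v_n^k\|_{L^\infty}\leq Ck$ agreeing with $\tilde v_n$ outside a set of measure vanishing as $n\to\infty$. Equi-integrability of $\{L(\nabla\tilde v_n^k)\}_n$ is handled as in \cite{jpm11}, by rescaling with $\lambda(R)\uparrow 1$ and invoking (A$_1$) together with (A$_2$) to dispose of the contribution from the bad set $\{|\nabla\tilde v_n|\geq R\}$.

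To impose the boundary trace $l_{t\xi}$, I would introduce a cutoff $\theta_\delta\in C^\infty_c(Y;[0,1])$ equal to $1$ on $Y_\delta:=]-{1\over 2}+\delta,{1\over 2}-\delta[^N$ and set
$$
w_n^{k,\delta}(y):=\theta_\delta(y)\tilde v_n^k(y)+\bigl(1-\theta_\delta(y)\bigr)l_{t\xi}(y).
$$
Then $\nabla w_n^{k,\delta}=\nabla\tilde v_n^k$ on $Y_\delta$, while on the thin transition annulus $Y\setminus Y_\delta$ it decomposes as the convex combination $\theta_\delta\nabla\tilde v_n^k+(1-\theta_\delta)t\xi$ plus a correction term of the form $(\tilde v_n^k-l_{t\xi})\otimes\nabla\theta_\delta$, whose size is controlled by the uniform smallness of $\tilde v_n^k-l_{\tau\xi}$ together with $|\tau-t|\,|\xi|$. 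Choosing $|\tau-t|$ much smaller than $\delta$, the correction can be made arbitrarily small in $L^\infty$, so condition $(\widehat{\rm A}_3)$ applied with $\zeta=t\xi$, $\hat\zeta=\nabla\tilde v_n^k\in\tau\overline{\LL}\subset\LL$, and appropriate parameters $s,\hat s$ produces the pointwise bound $L(\nabla w_n^{k,\delta})\leq\alpha_{2,t\xi}(1+L(\nabla\tilde v_n^k))$, transporting equi-integrability from $\tilde v_n^k$ to $w_n^{k,\delta}$. By convexity of $\LL$ and a careful choice of $\tau,s,\delta$, one also achieves $\nabla w_n^{k,\delta}(y)\in s\overline{\LL}$ almost everywhere. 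A diagonal extraction $w_n:=w_n^{k_n,\delta_n}$ then delivers the three conclusions of $(\widehat{\rm C}_{p,p})$.

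The main obstacle lies in the simultaneous control of the three conclusions through this coupled construction: synchronizing the truncation level $k$, the boundary thickness $\delta$, and the interpolation scales $t<\tau<s<1$ so that the transition-region gradients remain inside $s\overline{\LL}$ while $L(\nabla w_n)$ stays equi-integrable and $|t\nabla v_n-\nabla w_n|\to 0$ in measure. In \cite{jpm11} the weaker hypothesis (A$_3$) is invoked at a single fixed $\xi$, which suffices when $L$ is finite; here, because $l_{t\xi}$ differs from $l_{\tau\xi}$ by an unavoidable scaling and because gradients must be kept strictly inside $\LL$, one needs the stronger form $(\widehat{\rm A}_3)$, whose very structure---controlling $L$ on convex combinations of two arbitrary points of $\LL$ plus a small perturbation---is tailor-made for exactly this transition estimate.
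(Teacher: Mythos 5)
Your overall skeleton matches the paper's: exploit $p>N$ to get uniform convergence $v_n\to l_\xi$, push the gradients strictly inside $\LL$ by a radial rescaling, arrange equi-integrability of the rescaled energies via (A$_1$)--(A$_2$), then glue to the boundary datum $l_{t\xi}$ with a cut-off and control the transition annulus with ($\widehat{\rm A}_3$), keeping everything in $s\overline{\LL}$ by convexity. However, the Lipschitz (Acerbi--Fusco) truncation you insert creates a genuine gap. On the exceptional set where $\tilde v_n^k$ is modified, the truncated gradient is only bounded by $Ck$ in norm; it need not lie in $\tau\overline{\LL}$, nor even in $\LL$. Since $L$ may equal $+\infty$ off $\LL$, the claim that $\{L(\nabla\tilde v_n^k)\}_n$ is equi-integrable, the application of ($\widehat{\rm A}_3$) with $\hat\zeta=\nabla\tilde v_n^k\in\tau\overline{\LL}\subset\LL$, and the final inclusion $\nabla w_n^{k,\delta}\in s\overline{\LL}$ all break down precisely on that set. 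Convexity of $\LL$ does not rescue this, because the truncation is not a pointwise convex combination of admissible gradient values.

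The paper avoids truncation altogether. It applies Sychev's biting lemma to $f_n=L(s\nabla v_n)$ to isolate a bad set $Y\setminus Y_n=\{L(s\nabla v_n)>M_n\}$ on which, by continuity of $L$ on the closed set $s\overline{\LL}$, one has $R_n:=\essinf_{Y\setminus Y_n}|s\nabla v_n|\to\infty$; it then sets $u_n:=\lambda(R_n)v_n$ and uses (A$_1$) to make the contribution of the bad set to $L(s\nabla u_n)$ negligible. This is a global \emph{rescaling}, not a truncation, so $\nabla u_n=\lambda(R_n)\nabla v_n$ stays in $\LL$ automatically (since $\lambda\overline{\LL}\subset\LL$) and $|t\nabla u_n-t\nabla v_n|\to0$ in $L^p$ without having to send an auxiliary parameter $\tau\to t$ along the diagonal. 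A second point worth noting: you apply ($\widehat{\rm A}_3$) at $\zeta=t\xi$ and must separately justify $t\xi\in\LL$ and the admissibility of the perturbation; the paper instead writes the annulus gradient as a convex combination of two elements of $\sqrt{t}\,\overline{\LL}$ (using $0\in{\rm int}(\LL)$ to absorb the term $\nabla\phi_n\otimes(u_n-l_\xi)$, whose $L^\infty$ norm is forced to vanish by the choice $\eps_n=\sqrt{\|u_n-l_\xi\|_{L^\infty}}$ of the cut-off thickness), obtaining $s=\sqrt{t}$ explicitly and applying ($\widehat{\rm A}_3$) at $\zeta=\sqrt{t}\,\xi\in{\rm int}(\LL)$. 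If you delete the Lipschitz truncation and substitute the biting-lemma rescaling for the equi-integrability step, your argument aligns with the paper's and closes.
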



\begin{proof}[\bf Proof of Theorem \ref{Appli-Theorem-Bis}]
Let $p\in]N,\infty[$ and let $t\in]0,1[$. Let $\xi\in\MM$ and let $\{v_n\}_n\subset W^{1,p}(Y;\RR^m)$ be such that: 
\begin{eqnarray}
&& v_n\to l_\xi\hbox{ in }L^{p}(Y;\RR^m);\label{Eq-Appli-1-bis}\\
&&\sup_n\int_YL(\nabla v_n(y))dy<\infty, \hbox{ and so} \label{Eq-Appli-3}\\
&&\nabla v_n(y)\in \LL \hbox{ for all }n\geq 1\hbox{ and a.a. }y\in Y \label{Eq-Appli-3-bis}.
\end{eqnarray}
As $L$ is $p$-coercive, from \eqref{Eq-Appli-3} we have
\begin{equation}\label{Eq-Appli-1-bis-bis}
\sup_n\int_Y|\nabla v_n(y)|^pdy<\infty.
\end{equation}
Combining \eqref{Eq-Appli-1-bis} with \eqref{Eq-Appli-1-bis-bis} we deduce that, up to a subsequence,
\begin{equation}\label{Eq-Appli-1}
v_n\wto l_\xi\hbox{ in }W^{1,p}(Y;\RR^m)
\end{equation}
Since $\LL$ is convex, by \eqref{Eq-Appli-1} and \eqref{Eq-Appli-3-bis} we have 
\begin{equation}\label{Eq-Appli-4-1}
\xi\in\overline{\LL}.
\end{equation}
As $p\in]N,\infty[$, \eqref{Eq-Appli-1} implies that, up to a subsequence, 
\begin{equation}\label{Eq-Appli-4}
\|v_n- l_\xi\|_{L^\infty(Y;\RR^m)}\to0.
\end{equation}
\subsection*{Step 1: using the biting Lemma} First, recall Sychev's version of the biting lemma (see \cite[Lemma 3.2]{sychev05a}).
\begin{lemma}\label{biting-lemma}
Let $\{f_n\}_n\subset L^1(Y;[0,\infty[)$ be such that $\sup_n\int_Yf_n(y)dy<\infty$. Then, there exist a subsequence $\{f_n\}_n$ (not relabeled) and $\{M_n\}_n\subset]0,\infty[$ with $M_n\to\infty$ such that $\{f_n\chi_{Y_n}\}_n$ is equi-integrable with $\chi_{Y_n}$ denoting the characteristic function of $Y_n:=\{y\in Y:f_n(y)\leq M_n\}$.
\end{lemma}

Fix any $s\in]0,1[$. Taking \eqref{Eq-Appli-3} and \eqref{Eq-Appli-3-bis}, from (A$_2$) we see that
\begin{equation}\label{Eq-Appli-3-bis-bis}
\sup_n\int_YL(s\nabla v_n(y))dy<\infty
\end{equation}
According to \eqref{Eq-Appli-3-bis-bis}, from Lemma \ref{biting-lemma}, that we apply with $f_n=L(s\nabla v_n)$, we can assert that, up to a subsequence, 
\begin{equation}\label{Eq-Appli-5}
\{L(s\nabla v_n)\chi_{Y_n}\}_n\hbox{ is equi-integrable.} 
\end{equation}
Let $\{R_n\}_n$ be given by
$
R_n:=\essinf_{y\in Y\setminus Y_n}|s\nabla v_n(y)|.
$
By \eqref{Eq-Appli-3-bis} we have $s\nabla v_n(y)\in s\overline{\LL}$  for a.a. $y\in Y$.  Moreover, $s\overline{\LL}\subset{\rm int}(\LL)$ because $\LL$ is convex and $0\in{\rm int}(\LL)$. Thus, since $L$ is continuous on ${\rm int}(\LL)$, $L$ is continuous on $s\overline{\LL}$. Noticing that $s\overline{\LL}$ is closed, $Y\setminus Y_n=\{y\in Y:L(s\nabla v_n(y))>M_n\}$ and $M_n\to\infty$, we have
$
R_n\to\infty.
$
Let $\{u_n\}_n\subset W^{1,p}(Y;\RR^m)$ be defined by
$$
u_n:=\lambda_nv_n\hbox{ with }\lambda_n:=\lambda(R_n),
$$
where $\lambda:]1,\infty[\to]0,1[$, with $\lambda(R_n)\to1$, is given by (A$_{1}$). As $\lambda\overline{\LL}\subset\LL$ for all $\lambda\in]0,1[$, taking \eqref{Eq-Appli-3-bis} into account, it follows that
\begin{equation}\label{Eq-Appli-4-1-1}
\nabla u_n(y)\in\LL\hbox{ for all }n\geq 1\hbox{ and a.a. }y\in Y.
\end{equation}
From \eqref{Eq-Appli-1-bis-bis} and \eqref{Eq-Appli-4} we have:
\begin{eqnarray}
&& \|\nabla v_n-\nabla u_n\|_{L^p(Y;\MM^{m\times N})}\to0;\label{Eq-Appli-6bis}\\
&&\|u_n-l_\xi\|_{L^\infty(Y;\RR^m)}\to0.\label{Eq-Appli-6}
\end{eqnarray}
On the other hand, given any $n\geq 1$, $L(s\nabla u_n)=L(\lambda_n s\nabla v_n)\chi_{Y_n}+L(\lambda_n s\nabla v_n)\chi_{Y\setminus Y_n}$, and so $L(s\nabla u_n)\leq\alpha_1(1+L(s\nabla v_n)\chi_{Y_n})+L(\lambda_n s\nabla v_n)\chi_{Y\setminus Y_n}$ by using (A$_{2}$) together with \eqref{Eq-Appli-3-bis}. Moreover $|s\nabla v_n|\geq R_n$ on $Y\setminus Y_n$, hence 
$$
L(s\nabla u_n)\leq\alpha_1\left(1+L(s\nabla v_n)\chi_{Y_n}\right)+\sup\left\{{L(\lambda_n\zeta)\over L(\zeta)}:\zeta\in \LL\hbox{ and }|\zeta|\geq R_n\right\}L(s\nabla v_n).
$$
Taking \eqref{Eq-Appli-3} and \eqref{Eq-Appli-5} into account and noticing that 
$$
\sup\left\{{L(\lambda_n\zeta)\over L(\zeta)}:\zeta\in \LL\hbox{ and }|\zeta|\geq R_n\right\}\to0
$$
by (A$_1$), we see that $\{L(s\nabla u_n)\}_n$ is equi-integrable. By using (A$_2$) we conclude that
\begin{equation}\label{Eq-Appli-7}
\{L(\hat s\nabla u_n)\}_n\hbox{ is equi-integrable for all }\hat s\in]0,s].
\end{equation}

\subsection*{Step 2: cut-off method} For each $n\geq 1$, we consider $\phi_n\in C_c^\infty(Y;[0,1])$ a cut-off function between $Q_n:=]{\eps_n-1\over2},{1-\eps_n\over 2}[^N$ and $Y$ such that $\|\phi_n\|_{L^\infty(Y)}\leq{2\over\eps_n}$ with 
$$
\eps_n:=\sqrt{{\|u_n-l_\xi\|}}_{L^\infty(Y;\RR^m)}.
$$
(Note that, by \eqref{Eq-Appli-6}, $\eps_n\to0$.) Define $w_n\in l_{t\xi}+W^{1,p}_0(Y;\RR^m)$ by
$$
w_n:=t(l_\xi+\phi_n(u_n-l_\xi))=l_{t\xi}+\phi_n(tu_n-l_{t\xi}).
$$
Then
$$
\nabla w_n=\left\{
\begin{array}{ll}
t\nabla u_n&\hbox{on }Q_n\\
t\left(\xi+\phi_n(\nabla u_n-\xi)\right)+(1-\sqrt{t})\left({t\over 1-\sqrt{t}}\nabla\phi_n\otimes(u_n-l_\xi)\right)&\hbox{on }Y\setminus Q_n.
\end{array}
\right.
$$
Since $\LL$ is convex, also are $\overline{\LL}$ and $\sqrt{t}\ \overline{\LL}$. As $\xi\in\overline{\LL}$ by \eqref{Eq-Appli-4-1}, $0\leq \phi_n\leq 1$, $\nabla u_n\in\LL\subset\overline{\LL}$ by \eqref{Eq-Appli-4-1-1}, we have $\xi+\phi_n(\nabla u_n-\xi)\in\overline{\LL}$, and so 
$
\sqrt{t}(\xi+\phi_n(\nabla u_n-\xi))\in \sqrt{t}\ \overline{\LL}.
$ 
Since $0\in{\rm int}(\LL)$, $B_\eps(0)\subset\LL\subset\overline{\LL}$ for some $\eps>0$, where $B_\eps(0):=\{a\in\MM:|a|<\eps\}$. As $|{(\sqrt{t}/ (1-\sqrt{t}))}\nabla\phi_n\otimes(u_n-l_\xi)|\leq {(2\sqrt{t}/(1-\sqrt{t}))}\eps_n\to0$, we can assert that ${(\sqrt{t}/ (1-\sqrt{t}))}\nabla\phi_n\otimes(u_n-l_\xi)\in B_\eps(0)$, and so ${(t/ (1-\sqrt{t}))}\nabla\phi_n\otimes(u_n-l_\xi)\in \sqrt{t}\ \overline{\LL}$. It follows that
$$
\nabla w_n(y)\in \sqrt{t}\ \overline{\LL}\hbox{ for all }n\geq 1\hbox{ and a.a. }y\in Y.
$$
On the other hand, we have
\begin{equation}\label{Eq-Appli-8}
L(\nabla w_n)\leq L(t\nabla u_n)+L(\nabla w_n)\chi_{Y\setminus Q_n}.
\end{equation}
Noticing that $|{(t/ (1-\sqrt{t}))}\nabla\phi_n\otimes(u_n-l_\xi)|\leq {(2t/ (1-\sqrt{t}))}\eps_n\to0$ and, since $\LL$ is convex, $0\in{\rm int}(\LL)$ and $\xi\in\overline{\LL}$ by \eqref{Eq-Appli-4-1}, $\sqrt{t}\xi\in \sqrt{t}\ \overline{\LL}\subset{\rm int}(\LL)\subset \LL$, from ($\widehat{\rm A}_{3}$) we see that
\begin{equation}\label{Eq-Appli-10}
L(\nabla w_n)\chi_{Y\setminus Q_n}\leq\alpha_{2,\sqrt{t}\xi}(1+L(\sqrt{t}\nabla u_n)).
\end{equation}
Combining \eqref{Eq-Appli-10} with \eqref{Eq-Appli-8} we deduce that for every $n\geq 1$,
$$
L(\nabla w_n)\leq L(t\nabla u_n)+\alpha_{2,\sqrt{t}\xi}(1+L(\sqrt{t}\nabla u_n)).
$$
Taking \eqref{Eq-Appli-7} into account with $s=\sqrt{t}$, we deduce that 
$$
\{L(\nabla w_n)\}_n\hbox{ is equi-integrable.}
$$
Finally, it is easy to see that there exists $K>0$, which only depends on $p$, such that
$$
|\nabla w_n-t\nabla u_n|^p\leq K\left(|\xi|^p+|\nabla v_n|^p\right)\chi_{Y\setminus Q_n} +K\eps_n^p.
$$
Taking \eqref{Eq-Appli-1-bis-bis} into account and recalling that $|Y\setminus Q_n|\to 0$ and $\eps_n\to0$, we deduce that $\|\nabla w_n-t\nabla u_n\|_{L^p(Y;\MM^{m\times N})}\to0$, and so $\|t\nabla v_n-\nabla w_n\|_{L^p(Y;\MM^{m\times N})}\to0$ by combining with \eqref{Eq-Appli-6bis}. It follows that
$$
|t\nabla v_n-\nabla w_n|\to0\hbox{ in measure,}
$$
and the proof is complete.
\end{proof}

\subsection{Relaxation of nonconvex integrals with exponential-growth} From now on, we assume that $L$ has exponential-growth, i.e., 
\begin{equation}\label{Exp-Growth}
\beta {\rm \exp}^{\gamma F(\cdot)}\leq L(\cdot)\leq \alpha\left(1+\exp^{F(\cdot)}\right)
\end{equation}
for some $\alpha,\beta>0$, $\gamma\geq 1$ and some Borel measurable function $F:\MM\to[0,\infty]$. Let us consider the following two conditions on $F$:
\begin{itemize}
\item[(a$_1$)] there exists $r\in]0,\infty[$ such that $\liminf\limits_{|\xi|\to\infty}{F(\xi)\over|\xi|^r}\in]0,\infty];$ 
\item[(a$_2$)] there exists $\eps>0$ such that $\sup\limits_{|a|\leq\eps}F(a)<\infty$. 
\end{itemize}
In what follows, $\FF$ denotes the effective domain of $F$. It is easily seen that, under \eqref{Exp-Growth}, the effective domain of $L$ is equal to that of $F$, i.e., $\LL=\FF$. The following proposition is a slight variant of \cite[Proposition 5.1]{jpm11}.
\begin{proposition}\label{Prop-Appli-1}
If $F$ is finite and if $F$ satisfies {\rm (a$_1$)}, {\rm (a$_2$)} and 
\begin{itemize}
\item[(a$_3$)] there exists $\theta\in]0,\infty[$ such that $F(t\xi)\leq t^\theta F(\xi)$ for all $\xi\in\MM$ and all $t\in[0,1];$ 
\item[(a$_4$)] $F(\zeta-\xi)\leq \sqrt[3]{\gamma}\big(F(\zeta)+F(\xi)\big)$ for all $\zeta,\xi\in\MM;$
\item[(a$_5$)] $F(\zeta+\xi)\leq \sqrt[3]{\gamma}\big(F(\zeta)+F(\xi)\big)$ for all $\zeta,\xi\in\MM$,
\end{itemize}
then $L$ satisfies {\rm (A$_1$)}, {\rm (A$_2$)} and {\rm (A$_3$)}. In particular, from Theorem {\em\ref{Appli-Theorem}} we deduce that {\rm(C$_{p,p}$)} holds for all $p\in]N,\infty[$ whenever $L$ is continuous.
\end{proposition}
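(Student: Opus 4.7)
The plan is to verify each of (A$_1$), (A$_2$), and (A$_3$) in turn. The guiding principle is that \eqref{Exp-Growth} gives
$$
\exp^{\gamma F(\xi)}\leq L(\xi)/\beta\quad\hbox{and}\quad L(\xi)\leq \alpha\bigl(1+\exp^{F(\xi)}\bigr),
$$
so any pointwise estimate of the form $F(w)\leq \gamma F(\zeta)+C$ with $C$ independent of $\zeta$ translates into $L(w)\leq\alpha_C\bigl(1+L(\zeta)\bigr)$; all three of (A$_1$), (A$_2$), (A$_3$) are precisely of this shape.

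For (A$_2$), I would use (a$_3$) to write $F(t\xi)\leq t^\theta F(\xi)\leq F(\xi)$ for $t\in[0,1]$, then apply the two bounds above (together with $\gamma\geq 1$) to get $L(t\xi)\leq\alpha(1+\exp^{F(\xi)})\leq\alpha(1+L(\xi)/\beta)$, so (A$_2$) holds with $\alpha_1:=\max\{\alpha,\alpha/\beta\}$.

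For (A$_3$), I would decompose $w=\xi+s(\zeta-\xi)+a=\xi+b$ with $b:=s(\zeta-\xi)+a$ and chain together (a$_5$) (applied twice), (a$_3$), and (a$_4$):
\begin{align*}
F(w)&\leq\sqrt[3]{\gamma}\bigl(F(\xi)+F(b)\bigr)\leq\sqrt[3]{\gamma}F(\xi)+\gamma^{2/3}\bigl(F(s(\zeta-\xi))+F(a)\bigr)\\
&\leq\sqrt[3]{\gamma}F(\xi)+\gamma^{2/3}\bigl(F(\zeta-\xi)+F(a)\bigr)\\
&\leq\gamma F(\zeta)+(\gamma+\sqrt[3]{\gamma})F(\xi)+\gamma^{2/3}F(a).
\end{align*}
The cube-root factor in (a$_4$)--(a$_5$) is calibrated precisely so that the coefficient of $F(\zeta)$ on the right comes out to exactly $\gamma$. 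Taking $\eta_\xi:=\eps$ from (a$_2$) and $M:=\sup_{|a|\leq\eps}F(a)<\infty$, this yields $F(w)\leq\gamma F(\zeta)+K_\xi$ with $K_\xi$ depending only on $\xi$; hence $L(w)\leq\alpha(1+\exp^{K_\xi}L(\zeta)/\beta)$, proving (A$_3$).

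The main obstacle will be (A$_1$). Using (a$_3$) together with the two bounds above,
$$
{L(\lambda\xi)\over L(\xi)}\leq{\alpha\over\beta}\bigl(\exp^{-\gamma F(\xi)}+\exp^{(\lambda^\theta-\gamma)F(\xi)}\bigr).
$$
By (a$_1$) there exist $c,R_0>0$ such that $F(\xi)\geq c|\xi|^r$ whenever $|\xi|\geq R_0$, so it remains to choose $\lambda:]1,\infty[\to]0,1[$ with $\lambda(R)\to 1$ and $(\gamma-\lambda(R)^\theta)R^r\to+\infty$, since then both exponentials vanish uniformly on $\{|\xi|\geq R\}$. If $\gamma>1$, any $\lambda(R)\to 1^-$ works, as $\gamma-\lambda(R)^\theta\to\gamma-1>0$. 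If $\gamma=1$, the tighter choice $\lambda(R):=(1-R^{-r/2})^{1/\theta}$ (for $R$ large) is needed, giving $(1-\lambda(R)^\theta)R^r=R^{r/2}\to\infty$ while $\lambda(R)\to 1^-$.
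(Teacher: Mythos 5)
Your proposal is correct and follows essentially the same route as the paper: the same exponential-growth sandwich reducing everything to additive estimates on $F$, the same choice $\lambda(R)=(1-R^{-r/2})^{1/\theta}$ for (A$_1$), and the same chaining of (a$_3$)--(a$_5$) for (A$_3$) (you bracket the sum as $\xi+(s(\zeta-\xi)+a)$ where the paper uses $(\xi+s(\zeta-\xi))+a$, which only changes the harmless constants on $F(\xi)$ and $F(a)$ while keeping the essential coefficient $\gamma$ on $F(\zeta)$).
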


Proposition \ref{Prop-Appli-1} can be applied for some convex functions $F$: for example, $F(\cdot)=|\cdot|^\nu$ with $\nu\in[1,\infty[$ and $\gamma=2^{3\nu}$. Proposition \ref{Prop-Appli-1} can be also  applied for some nonconvex functions $F$: for example, $F$ can be of the form $F(\cdot)=f(|\cdot|)$ with $f:[0,\infty[\to[0,\infty[$ an increasing concave function and $\gamma=1$ (see \cite[Corollary 1.8]{jpm11} for more details).

\begin{proof}[\bf Proof of Proposition \ref{Prop-Appli-1}]
First of all, as $L$ has exponential-growth, given any $\lambda:]1,\infty[\to]0,1[$, by using (a$_3$) we have 
$$
{L(\lambda(R)\xi)\over L(\xi)}\leq{\alpha\left(1+\exp^{F(\lambda(R)\xi)}\right)\over\beta\exp^{\gamma F(\xi)}}\leq{\alpha\over\beta}\left(\exp^{-\gamma F(\xi)}+\exp^{\left(\lambda^\theta(R)-\gamma\right) F(\xi)}\right)
$$
for all $R>1$ and all $\xi\in\MM$ with $\alpha,\beta>0$, $\gamma\geq 1$ and $\theta\in]0,\infty[$ given by (a$_3$). From (a$_1$), setting either $\delta:={1\over 2}\liminf_{|\xi|\to\infty}{F(\xi)\over|\xi|^r}$ if $\liminf_{|\xi|\to\infty}{F(\xi)\over|\xi|^r}\in]0,\infty[$ or $\delta=1$ if $\liminf_{|\xi|\to\infty}{F(\xi)\over|\xi|^r}=\infty$ (and so $\lim_{|\xi|\to\infty}{F(\xi)\over|\xi|^r}=\infty$), with $r\in]0,\infty[$,  
we can assert that there exists $R_\delta>1$ such that 
\begin{equation}\label{HypOtHeSis-d2}
F(\xi)\geq \delta|\xi|^r\hbox{ for all }|\xi|\geq R_\delta.
\end{equation}
For each $R\geq R_\delta$, as $\lambda^\theta(R)-\gamma\leq \lambda^\theta(R)-1<0$, by using \eqref{HypOtHeSis-d2}, we see that
$$
{L(\lambda(R)\xi)\over L(\xi)}\leq{\alpha\over\beta}\left(\exp^{-\gamma\delta R^r}+\exp^{\delta\left(\lambda^\theta(R)-1\right) R^r}\right)
$$ 
whenever $|\xi|\geq R$, and  consequently 
$$
\sup_{|\xi|\geq R}{L(\lambda(R)\xi)\over L(\xi)}\leq{\alpha\over\beta}\left(\exp^{-\gamma\delta R^r}+\exp^{\delta\left(\lambda^\theta(R)-1\right) R^r}\right)
$$
for all $R\geq R_\delta$, which shows that (A$_1$) holds with $\lambda(R)=\big(1-R^{-{r\over 2}}\big)^{1\over\theta}$.

By the fact that $L$ has exponential-growth, for every $\xi\in\MM$ and every $t\in[0,1]$,  by using (a$_3$) we have
$$
L(t\xi)\leq \alpha(1+\exp^{F(\xi)})\leq \alpha(1+\exp^{\gamma F(\xi)})\leq\alpha\max\left\{1,{1\over\beta}\right\}(1+L(\xi)),
$$
which shows that (A$_2$) holds with $\alpha_1=\alpha\max\big\{1,{1\over\beta}\big\}$.

Finally, as $L$ has exponential-growth, by using (a$_3$), (a$_4$) and (a$_5$) we see that
$$
L(\xi+t(\zeta-\xi)+a)\leq\alpha\left(1+\exp^{\sqrt[3]{\gamma} F(a)}\exp^{(\gamma+(\sqrt[3]{\gamma})^2) F(\xi)}\exp^{\gamma F(\zeta)}\right)
$$
for all $\xi,\zeta, a\in\MM$ and all $t\in[0,1]$. From (a$_2$) there exists $\eps>0$ such that
$$
M:=\sup_{|a|\leq \eps}F(a)<\infty,
$$
and consequently, for every $\xi\in\MM$, we have
$$
L(\xi+t(\zeta-\xi)+a)\leq\alpha\max\left\{1,{\exp^{\sqrt[3]{\gamma} M}\exp^{(\gamma+(\sqrt[3]{\gamma})^2)F(\xi)}\over\beta}\right\}\left(1+L(\zeta)\right)
$$
for all $t\in[0,1]$, all $\zeta\in\MM$ and all $a\in\MM$ with $|a|\leq \eps$, which shows that (A$_3$) holds with $\alpha_{2,\xi}=\alpha\max\Big\{1,{\exp^{\sqrt[3]{\gamma} M}\exp^{(\gamma+(\sqrt[3]{\gamma})^2)F(\xi)}\over\beta}\Big\}$.
\end{proof}

\medskip

The following proposition is another variant of \cite[Theorem 1.7]{jpm11}.

\begin{proposition}\label{Prop-Appli-2}
If $F$ is convex and $F(0)=0$ and if $F$ satisfies {\rm (a$_1$)} and {\rm (a$_2$)}, then $L$ satisfies {\rm (A$_1$)}, {\rm (A$_2$)} and {\rm ({$\widehat{\rm A}$}$_3$)}. In particular, from Theorem {\rm\ref{Appli-Theorem-Bis}} we deduce that {\rm({$\widehat{\rm C}$}$_{p,p}$)} holds for all $p\in]N,\infty[$ whenever $L$ is continuous on ${\rm int}(\FF)$.
\end{proposition}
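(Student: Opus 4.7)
The plan is to reduce everything to consequences of the convexity of $F$ together with $F(0)=0$. The decisive observation is that these two facts give, for every $\xi\in\MM$ and every $t\in[0,1]$,
$$
F(t\xi)=F\bigl(t\xi+(1-t)\cdot0\bigr)\leq tF(\xi)+(1-t)F(0)=tF(\xi),
$$
so $F$ automatically satisfies condition (a$_3$) with $\theta=1$. This is the key reason why the proof of Proposition \ref{Prop-Appli-1} already covers conditions (A$_1$) and (A$_2$) verbatim in our setting.

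\emph{First}, I would check (A$_2$). Using \eqref{Exp-Growth} together with $F(t\xi)\leq F(\xi)$ for $t\in[0,1]$ and $\xi\in\LL=\FF$, one obtains
$$
L(t\xi)\leq\alpha\bigl(1+\exp^{F(\xi)}\bigr)\leq\alpha\bigl(1+\exp^{\gamma F(\xi)}\bigr)\leq\alpha\max\bigl\{1,1/\beta\bigr\}\bigl(1+L(\xi)\bigr),
$$
so (A$_2$) holds with $\alpha_1=\alpha\max\{1,1/\beta\}$. \emph{Second}, for (A$_1$), the computation in the proof of Proposition \ref{Prop-Appli-1} applies without any modification: using (a$_1$) pick $\delta>0$ and $R_\delta>1$ with $F(\xi)\geq\delta|\xi|^r$ for $|\xi|\geq R_\delta$, and take $\lambda(R):=1-R^{-r/2}$. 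Then for $R\geq R_\delta$ and $|\xi|\geq R$,
$$
\frac{L(\lambda(R)\xi)}{L(\xi)}\leq\frac{\alpha}{\beta}\Bigl(\exp^{-\gamma\delta R^r}+\exp^{\delta(\lambda(R)-1)R^r}\Bigr)=\frac{\alpha}{\beta}\Bigl(\exp^{-\gamma\delta R^r}+\exp^{-\delta R^{r/2}}\Bigr),
$$
which tends to $0$ as $R\to\infty$, proving (A$_1$).

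\emph{Third} comes the only genuinely new point, condition ($\widehat{\rm A}_3$), which will be the main obstacle because the point at which $L$ is evaluated now depends on three parameters $s,\hat s, a$ and on two matrices $\zeta,\hat\zeta$. The trick is to rewrite
$$
s\bigl(\zeta+\hat s(\hat\zeta-\zeta)\bigr)+(1-s)a=s(1-\hat s)\zeta+s\hat s\,\hat\zeta+(1-s)a,
$$
which is a genuine convex combination since $s(1-\hat s)+s\hat s+(1-s)=1$ and the three coefficients lie in $[0,1]$. By convexity of $F$ and by $0\le s(1-\hat s),s\hat s,(1-s)\le 1$,
$$
F\bigl(s(1-\hat s)\zeta+s\hat s\,\hat\zeta+(1-s)a\bigr)\leq F(\zeta)+F(\hat\zeta)+F(a).
$$
Using (a$_2$), set $\eta_\zeta:=\eps$ and $M:=\sup_{|a|\leq\eps}F(a)<\infty$; then for $|a|\leq\eta_\zeta$,
$$
L\bigl(s(\zeta+\hat s(\hat\zeta-\zeta))+(1-s)a\bigr)\leq\alpha\bigl(1+\exp^{F(\zeta)+M}\exp^{F(\hat\zeta)}\bigr).
$$
Since $\gamma\geq1$, $\exp^{F(\hat\zeta)}\leq\exp^{\gamma F(\hat\zeta)}\leq L(\hat\zeta)/\beta$, so
$$
L\bigl(s(\zeta+\hat s(\hat\zeta-\zeta))+(1-s)a\bigr)\leq\alpha_{2,\zeta}\bigl(1+L(\hat\zeta)\bigr)\quad\text{with}\quad\alpha_{2,\zeta}:=\alpha\max\Bigl\{1,\tfrac{\exp^{F(\zeta)+M}}{\beta}\Bigr\},
$$
which is exactly ($\widehat{\rm A}_3$). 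Finally, since $F$ is convex with $0\in{\rm int}(\FF)=\{F<\infty\}$ by (a$_2$), $\FF=\LL$ is convex and contains $0$ in its interior, so the hypotheses of Theorem \ref{Appli-Theorem-Bis} are met whenever $L$ is continuous on ${\rm int}(\FF)$; the last sentence of the proposition then follows.
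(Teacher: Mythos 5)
Your proof is correct and follows essentially the same route as the paper: the same exponential-growth estimates for (A$_1$) and (A$_2$) (with the same choice $\lambda(R)=1-R^{-r/2}$), and for ($\widehat{\rm A}_3$) the same rewriting of $s(\zeta+\hat s(\hat\zeta-\zeta))+(1-s)a$ as a convex combination of $\zeta$, $\hat\zeta$ and $a$ followed by convexity of $F$ and (a$_2$). Your explicit remark that convexity plus $F(0)=0$ yields (a$_3$) with $\theta=1$, and your verification that (a$_2$) gives $0\in{\rm int}(\FF)$, only make explicit what the paper leaves implicit.
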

\begin{proof}[\bf Proof of Proposition \ref{Prop-Appli-2}]
First of all, as $L$ has exponential-growth, $F$ is convex and $F(0)=0$, given any $\lambda:]1,\infty[\to]0,1[$, we have 
$$
{L(\lambda(R)\xi)\over L(\xi)}\leq{\alpha\left(1+\exp^{F(\lambda(R)\xi)}\right)\over\beta\exp^{\gamma F(\xi)}}\leq{\alpha\over\beta}\left(\exp^{-\gamma F(\xi)}+\exp^{\left(\lambda(R)-\gamma\right) F(\xi)}\right)
$$
for all $R>1$ and all $\xi\in\FF$ with $\alpha,\beta>0$ and some $\gamma\geq 1$. From (a$_1$), setting either $\delta:={1\over 2}\liminf_{|\xi|\to\infty}{F(\xi)\over|\xi|^r}$ if $\liminf_{|\xi|\to\infty}{F(\xi)\over|\xi|^r}\in]0,\infty[$ or $\delta=1$ if $\liminf_{|\xi|\to\infty}{F(\xi)\over|\xi|^r}=\infty$ (and so $\lim_{|\xi|\to\infty}{F(\xi)\over|\xi|^r}=\infty$), with $r\in]0,\infty[$,  
we can assert that there exists $R_\delta>1$ such that 
\begin{equation}\label{HypOtHeSis-d2}
F(\xi)\geq \delta|\xi|^r\hbox{ for all }|\xi|\geq R_\delta.
\end{equation}
For each $R\geq R_\delta$, as $\lambda(R)-\gamma\leq\lambda(R)-1<0$, by using \eqref{HypOtHeSis-d2}, we see that
$$
{L(\lambda(R)\xi)\over L(\xi)}\leq{\alpha\over\beta}\left(\exp^{-\gamma\delta R^r}+\exp^{\delta\left(\lambda(R)-1\right) R^r}\right)
$$ 
whenever $|\xi|\geq R$ and $\xi\in\FF$, and  consequently 
$$
\sup\left\{{L(\lambda(R)\xi)\over L(\xi)}:\xi\in\FF\hbox{ and }|\xi|\geq R\right\}\leq{\alpha\over\beta}\left(\exp^{-\gamma\delta R^r}+\exp^{\delta\left(\lambda(R)-1\right) R^r}\right)
$$
for all $R\geq R_\delta$, which shows that (A$_1$) holds with $\lambda(R)=1-R^{-{r\over 2}}$.

Using again the fact that $L$ has exponential-growth, $F$ is convex and $F(0)=0$, for every $\xi\in\FF$ and every $t\in[0,1]$,  we have
$$
L(t\xi)\leq \alpha(1+\exp^{F(\xi)})\leq\alpha(1+\exp^{\gamma F(\xi)})\leq\alpha\max\left\{1,{1\over\beta}\right\}(1+L(\xi)),
$$
which shows that (A$_2$) holds with $\alpha_1=\alpha\max\big\{1,{1\over\beta}\big\}$.

Finally, from (a$_2$) there exists $\eps>0$ such that
\begin{equation}\label{widehat-A-3-Assumption}
M:=\sup_{|a|\leq \eps}F(a)<\infty.
\end{equation}
As $L$ has exponential-growth and $F$ is convex, we see that
$$
L(s(\zeta+\hat s(\hat \zeta-\zeta))+(1-s)a)\leq\alpha\left(1+\exp^{F(a)}\exp^{F(\zeta)}\exp^{\gamma F(\hat \zeta)}\right)
$$
for all $\zeta,\hat\zeta, a\in\MM$ and all $s,\hat s\in[0,1]$. Consequently, taking \eqref{widehat-A-3-Assumption} into account, for every $\zeta\in\LL$, we have
$$
L(s(\zeta+\hat s(\hat \zeta-\zeta))+(1-s)a)\leq\alpha\max\left\{1,{\exp^{M}\exp^{F(\zeta)}\over\beta}\right\}\left(1+L(\hat\zeta)\right)
$$
for all $s,\hat s\in[0,1]$, all $\hat\zeta\in\LL$ and all $a\in\MM$ with $|a|\leq \eps$, which shows that ({$\widehat{\rm A}$}$_3$) holds with $\alpha_{2,\zeta}=\alpha\max\big\{1,{\exp^{M}\exp^{\gamma F(\zeta)}\over\beta}\big\}$.
\end{proof}

\medskip

The following result is, for (i), a consequence of  Proposition \ref{Prop-Appli-1}, Theorem \ref{Appli-Theorem} and Theorem \ref{Main-Theorem} and, for (ii), a consequence of Proposition \ref{Prop-Appli-2}, Theorem \ref{Appli-Theorem-Bis} and Theorem \ref{Main-Theorem-Bis}. (In what follows, $\Omega$ is a bounded open and strongly star-shaped subset of $\RR^N$, see \S 3.4.)

\begin{corollary}\label{Applications-Of-Main-Theorems}
Consider $p\in]N,\infty[$ and assume that $F$ satisfies {\rm(a$_1$)} and {\rm(a$_2$)}.
\begin{itemize}
\item[(i)] Under {\rm(a$_3$)}, {\rm(a$_4$)} and {\rm(a$_5$)}, if $F$ is finite and if $L$ is continuous, then \eqref{Eq-Th-1} holds for all $u\in\Aff(\Omega;\RR^m)$. If moreover $F$ is convex then \eqref{Eq-Th-1} is satisfied for all $u\in W^{1,p}(\Omega;\RR^m)$.
\item[(ii)] If $F$ is convex and $F(0)=0$ and if $L$ is ru-usc and continuous on ${\rm int}(\FF)$, then \eqref{Eq-Th2} holds for all $u\in W^{1,p}(\Omega;\RR^m)$. 
\end{itemize}
\end{corollary}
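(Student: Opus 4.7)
I aim to route both parts of the corollary through Theorems \ref{Main-Theorem} and \ref{Main-Theorem-Bis} with $q = p \in\,]N,\infty[$, first extracting the localization principles from \S 6.1 and then verifying the approximation hypotheses via Lemma \ref{ApproX-Lemma-1}.

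For (i), under (a$_1$)--(a$_5$) with $F$ finite and $L$ continuous, Proposition \ref{Prop-Appli-1} yields (A$_1$), (A$_2$), (A$_3$), and Theorem \ref{Appli-Theorem} then gives (C$_{p,p}$); since $\LL = \FF = \MM$ is closed and $L$ is continuous on $\LL$, Theorem \ref{Main-Theorem}(a) delivers \eqref{Eq-Th-1} on $\Aff(\Omega;\RR^m)$. For the second sentence, assume $F$ is convex and set $H := \exp^F$: then $H$ is convex and finite, $\HH = \MM$, $0\in{\rm int}(\HH)$, and $\beta H \leq L \leq \alpha(1+H)$ (using $F \geq 0$, $\gamma \geq 1$). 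Jensen's inequality applied to $\int_Y H(\nabla\phi)\,dy \geq H(\xi)$ for $\phi \in l_\xi + W^{1,p}_0(Y;\RR^m)$ gives $\Z_pL \geq \beta H$, and $\Z_pL \leq L \leq \alpha(1+H)$ is immediate, so $\Z_pL$ also has $H$-convex growth; continuity of $\Z_pL$ on $\MM = {\rm int}(\HH)$ follows from Lemma \ref{Fonseca-Lemma}. For $u \in W^{1,p}\setminus\Aff$ with $\int_\Omega \Z_pL(\nabla u)\,dx < \infty$, Lemma \ref{ApproX-Lemma-1} applied with $G = \Z_pL$ furnishes a piecewise-affine $\{u_n\}$ with $u_n \to u$ in $W^{1,p}$ and $\int \Z_p(\Z_pL)(\nabla u_n) \to \int \Z_p(\Z_pL)(\nabla u)$; the standard iterative identity $\Z_p(\Z_pL) = \Z_pL$ (combined with $\Z_p(\Z_pL) \leq \Z_pL$) then yields (H$_{p,p}$), and Theorem \ref{Main-Theorem}(b) concludes.

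For (ii), under (a$_1$), (a$_2$) with $F$ convex, $F(0) = 0$, and $L$ ru-usc and continuous on ${\rm int}(\FF)$, Proposition \ref{Prop-Appli-2} together with Theorem \ref{Appli-Theorem-Bis} furnishes ($\widehat{\rm C}_{p,p}$). Condition (R$_1$) is assumed; convexity of $F$ together with $F(0) = 0$ and (a$_2$) forces $0 \in {\rm int}(\FF)={\rm int}(\LL)$, so the elementary fact that a convex set $C$ with $0 \in {\rm int}(C)$ satisfies $t\overline{C} \subset {\rm int}(C)$ for every $t \in\,]0,1[$ delivers (R$_2$). Since $\Z_p\LL = \FF$ (from $\beta H \leq \Z_pL \leq \alpha(1+H)$ with $\HH = \FF$), the same convex-set argument applied to $\Z_p\LL$ yields (R$_3$). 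To verify ($\widehat{\rm H}_{p,p}$), fix $t \in\,]0,1[$ and $u \in W^{1,p}\setminus\Aff$ with $\int \Z_pL(\nabla u) < \infty$ and $\nabla u \in t\overline{\Z_p\LL}$ a.e.; (R$_3$) then forces $\nabla u(x) \in {\rm int}(\Z_p\LL) = {\rm int}(\HH)$ a.e., so Lemma \ref{ApproX-Lemma-1} with $G = \Z_pL$ applies exactly as in Part (i). Theorem \ref{Main-Theorem-Bis}(b) now delivers \eqref{Eq-Th2} on all of $W^{1,p}(\Omega;\RR^m)$.

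The principal technical point is the simultaneous verification of the $H$-convex growth of $\Z_pL$ (via Jensen applied to the convex majorant $H = \exp^F$), the conditions (R$_2$), (R$_3$) (via convexity of $\FF$ and $0\in{\rm int}(\FF)$), and the iterative identity $\Z_p(\Z_pL) = \Z_pL$ underlying the reduction from Lemma \ref{ApproX-Lemma-1} to (H$_{p,p}$) and ($\widehat{\rm H}_{p,p}$); once these are in hand, everything else is a mechanical combination of the cited theorems.
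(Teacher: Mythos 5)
Your overall architecture is the paper's: Proposition \ref{Prop-Appli-1} (resp.\ \ref{Prop-Appli-2}) plus Theorem \ref{Appli-Theorem} (resp.\ \ref{Appli-Theorem-Bis}) to obtain (C$_{p,p}$) (resp.\ ($\widehat{\rm C}_{p,p}$)), Lemma \ref{ApproX-Lemma-1} to obtain (H$_{p,p}$) (resp.\ ($\widehat{\rm H}_{p,p}$)), and then Theorems \ref{Main-Theorem} and \ref{Main-Theorem-Bis}. You are in fact more careful than the paper on two points it leaves implicit: the verification of (R$_2$)--(R$_3$) really does need $0\in{\rm int}(\FF)$ (supplied by (a$_2$)) in addition to convexity, and the Jensen bound $\Z_pL\geq\Z_p(\beta H)\geq\beta H$ is what identifies $\Z_p\LL$ with $\FF$ and reconciles the finiteness hypotheses.

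The one genuine deviation is your use of Lemma \ref{ApproX-Lemma-1} with $G=\Z_pL$. The lemma's conclusion then concerns $\Z_p(\Z_pL)$, so your derivation of (H$_{p,p}$) and ($\widehat{\rm H}_{p,p}$) hinges on the identity $\Z_p(\Z_pL)=\Z_pL$, which you dismiss as ``standard''. It is not: the inequality $\Z_p(\Z_pL)\geq\Z_pL$ is equivalent to the $W^{1,p}$-quasiconvexity of $\Z_pL$, a delicate property for $p<\infty$ and unbounded integrands (this is precisely the issue studied in \cite{benbelgacem00,oah-jpm08a}); it does hold here because $L$ has $H$-convex growth, but only by appeal to results of \cite{oah-jpm10b} that this paper neither states nor reproves, so as written this step is a gap in self-containedness. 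The paper's proof sidesteps it entirely by applying Lemma \ref{ApproX-Lemma-1} with $G=L$: the conclusion then already reads $\int_\Omega\Z_pL(\nabla u_n)\,dx\to\int_\Omega\Z_pL(\nabla u)\,dx$, and the only thing to check is that $\int_\Omega\Z_pL(\nabla u)\,dx<\infty$ forces $\int_\Omega L(\nabla u)\,dx<\infty$ --- exactly the Jensen comparison $\beta H\leq\Z_pL\leq L\leq\alpha(1+H)$ that you establish anyway. Replacing $G=\Z_pL$ by $G=L$ in your argument removes the dependence on idempotence and makes the proof coincide with the paper's.
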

\begin{proof}[\bf Proof of Corollary \ref{Applications-Of-Main-Theorems}]
(i) From Proposition \ref{Prop-Appli-1} and Theorem \ref{Appli-Theorem} we deduce that (C$_{p,p}$) holds, and the first part of (i) follows by using Theorem \ref{Main-Theorem}(a). If $F$ is convex, also is $H(\cdot):=\exp^{\gamma F(\cdot)}$. Then, as $\gamma\geq 1$, $L$ has $H$-convex-growth, i.e., $\beta H(\cdot)\leq L(\cdot)\leq \alpha(1+H(\cdot))$. As $F$ is finite and $L$ is continuous, Lemma \ref{ApproX-Lemma-1} shows that (H$_{p,p}$) holds, and the second part of (i) follows by using Theorem \ref{Main-Theorem}(b).

(ii)  From Proposition \ref{Prop-Appli-1} and Theorem \ref{Appli-Theorem}  we deduce that (${\widehat{\rm C}}_{p,p}$) is satisfied. Since $F$ is convex and $\gamma\geq 1$, $L$ has $H$-convex-growth. Hence $\Z_p\LL=\LL=\FF$, which means that  $\Z_p\LL$ and $\LL$ are convex, and so (R$_2$) and (R$_3$) are satisfied. Moreover, as $L$ in continuous on ${\rm int}(\FF)$, Lemma \ref{ApproX-Lemma-1} shows that (${\widehat{\rm H}}_{p,p}$) holds, and (ii) follows by using Theorem \ref{Main-Theorem-Bis}. \end{proof}

\subsection*{Acknowledgments} I gratefully acknowledges the many comments of M. A. Sychev during the preparation of this paper, and for the lectures on ``Young measures and weak convergence theory" that he gave at the University of N\^imes during may-june 2011.

The author also wishes to thank the ``R\'egion Languedoc Roussillon" for financial support through its program ``d'accueil de personnalit\'es \'etrang\`eres" which allowed to welcome M. A. Sychev, from the Sobolev Institute for Mathematics in Russia, in the Laboratory MIPA of the University of N\^imes.

\end{document}